\newtheorem{theorem}{Theorem} [section]
\newtheorem{lemma}[theorem]{Lemma}
\newtheorem{remark}{Remark}[section]
\numberwithin{equation}{section} 
\renewcommand{\geq}{\geqslant}
\renewcommand{\leq}{\leqslant}
\newcommand{\citethm}[1]{Theorem \ref{#1}}
\newcommand{\opfont}{\mathbb}
\newcommand{\BE}[2][]{\ensuremath{\operatorname{\opfont{E}}^{#1}\!\left[#2\right]}}
\newcommand{\bp}{\ensuremath{\opfont{P}}}
\newcommand{\BP}[2][]{\ensuremath{\operatorname{\opfont{P}}^{#1}\!\left(#2\right)}}
\newcommand{\BH}{\ensuremath{\mathscr{H}}}
\newcommand{\R}{\ensuremath{\operatorname{\mathbb{R}}}}
\newcommand{\argmax}{\ensuremath{\operatorname*{argmax}}}
\newcommand{\dd}{\ensuremath{\operatorname{d}\! }}
\newcommand{\dt}{\ensuremath{\operatorname{d}\! t}}
\newcommand{\ds}{\ensuremath{\operatorname{d}\! s}}
\newcommand{\dr}{\ensuremath{\operatorname{d}\! r}}
\newcommand{\dv}{\ensuremath{\operatorname{d}\! v}}
\newcommand{\dz}{\ensuremath{\operatorname{d}\! z}}
\newcommand{\dw}{\ensuremath{\operatorname{d}\! W}}
\newcommand{\dS}{\ensuremath{\operatorname{d}\! S}}
\newcommand{\setu}{\mathscr{U}}
\newcommand{\barv}{\bar{v}}
\newcommand{\barJ}{\bar{J}}
\newcommand{\blue}[1]{{\color{black}#1}}
\newcommand{\revise}[1]{{\color{black}#1}}
\newcommand{\nn}{\nonumber}
\newcommand{\ep}{\varepsilon}
\newcommand{\opla}{\mathscr{L}}
\newcommand{\lam}{\lambda}
\newcommand{\footremember}[2]{
	\footnote{#2}
	\newcounter{#1}
	\setcounter{#1}{\value{footnote}}
}
\begin{document}
\title{Learning to Optimally Stop Diffusion Processes, with Financial Applications\footnote{We thank the participants in the First INFORMS Conference on Financial Engineering and FinTech for their comments.
}
}
\author{
	Min Dai\footremember{alley}{Department of Applied Mathematics and School of Accounting and Finance, The Hong Kong Polytechnic University, Hung Hom, Kowloon, Hong Kong, China. Email: mindai@polyu.edu.hk}
	\and Yu Sun\footremember{trailer}{Department of Applied Mathematics, The Hong Kong Polytechnic University, Hung Hom, Kowloon, Hong Kong, China. Email: yusun017@gmail.com}
	\and Zuo Quan Xu\footremember{Germany}{Department of Applied Mathematics, The Hong Kong Polytechnic University, Hung Hom, Kowloon, Hong Kong, China. Email: maxu@polyu.edu.hk}
	\and Xun Yu Zhou\footremember{Mexico}{Department of Industrial Engineering and Operations Research \& Data Science Institute, Columbia University, New York, USA, NY 10027. Email: xz2574@columbia.edu}
}
\date{\revise{August 11, 2025}}
\maketitle

\begin{abstract}
	We study optimal stopping for diffusion processes with unknown model primitives within the continuous-time reinforcement learning (RL) framework developed by \cite{wang2020reinforcement}, and present applications to option pricing and portfolio choice. By penalizing the corresponding variational inequality formulation, we transform the stopping problem into a stochastic optimal control problem with two actions. We then randomize controls into Bernoulli distributions and add an entropy regularizer to encourage exploration. We derive a semi-analytical optimal Bernoulli distribution, based on which we devise RL algorithms using the martingale approach established in \cite{jia2022policy}. We establish a policy improvement theorem and \blue{prove the fast convergence of the resulting policy iterations.} We demonstrate the effectiveness of the algorithms in pricing finite-horizon American put options, solving Merton's problem with transaction costs, \blue{and scaling to high-dimensional optimal stopping problems.} In particular, we show that both the offline and online algorithms achieve high accuracy in learning the value functions and characterizing the associated free boundaries.
	
	~\\\textbf{Keywords:} Optimal stopping, reinforcement learning, policy improvement, option pricing, portfolio choice.
\end{abstract}

\section{Introduction}

Optimal stopping involves making decisions on the best timing to exit from (or enter into) an endeavor with an immediate payoff when waiting for a higher payoff in the future is no longer beneficial. Such problems abound in many applications, including finance, physics, and engineering. A prominent example is American-type options where one needs to decide when to exercise the right to buy or sell an underlying stock.

Optimal stopping can be regarded as a control problem in which there are only two control actions: stop (exit) or wait (continue). Yet historically, the optimal stopping theory has been developed fairly separately from that of the general optimal control, owing to the very specific nature of the former, even though both are based on the fundamentally same approaches, namely martingale theory and dynamic programming (DP).
For optimal stopping with a continuous-time diffusion process, DP leads to a variational inequality problem (also known as a free-boundary PDE) that, {\it if} solved, can be used to characterize the stopping/continuation regions in the time--state space and to obtain the optimal value of the problem.

However, there are two issues with the classical approach to optimal stopping, one conceptual and one technical. The first one is that the variational inequality formulation is available only when the model primitives are known and given, while in many applications model primitives are often hard to estimate to the desired accuracy or are simply unavailable.
The second issue is that even if the model is completely known and the variational inequality formulation is available, solving it by traditional numerical PDE methods (e.g., the finite difference method) in high dimensions is notoriously difficult, if not impossible,  due to the curse of dimensionality.


This is where continuous-time reinforcement learning (RL) comes to the rescue. RL has been developed to solve dynamic optimization in partially or completely unknown environments.
This is achieved through interacting with and exploring the environment: the RL agent tries different actions strategically and observes responses from the environment, based on which she learns to improve her action plans. A key to the success of this process is to carefully balance exploration (learning) and exploitation (optimization). RL is also a powerful way to deal with the curse of dimensionality, because it no longer attempts to
solve
dynamic programming equations; rather it employs Monte Carlo and stochastic gradient descent to compute policies and values, which is less prone to the dimensionality complexity (e.g., \citealp{han2018}).

While RL for discrete-time Markov decision processes (MDPs) has been developed for a long time \citep{Sutton2018}, recently the study of RL for controlled continuous-time diffusion processes has drawn increasing attention, starting from \cite{wang2020reinforcement}.
Continuous RL problems are important because many practical problems are naturally continuous in time (and in space). While one can discretize time upfront and turn a continuous-time problem into a discrete-time MDP, it has been experimentally shown (e.g., \citealp{munos2006policy,tallec2019making}) that this
approach is very sensitive to time discretization and performs poorly with small time steps. Equally important, there are more mathematical tools available for the continuous setting that make it possible to establish an overarching and rigorous theoretical foundation for RL algorithms, as demonstrated in a series of papers \cite{jia2022policy,jia2022policygradient,jia2023q}. Indeed, the study on continuous RL may provide new perspectives even for discrete-time MDPs.\footnote{For instance, a theoretical underpinning of \cite{jia2022policy,jia2022policygradient,jia2023q} is the martingale property of certain stochastic processes. It turns out that there is an analogous, and almost trivial, martingality for discrete-time MDPs; see \citet[Appendix C]{jia2022policy}, but such a property had never been mentioned let alone exploited in the literature per our best knowledge.} The general theory has already found applications in finance, including dynamic mean--variance analysis \citep{wang2020continuous,dai2023equilibrium,wu2024reinforcement} and Merton's problem \citep{jiang2022reinforcement,dai2023learning,dai2024learningcost}, to name but a few.

This paper studies optimal stopping in finite time horizons for  (potentially high-dimensional) diffusion processes with unknown model parameters. 
Instead of developing an RL theory for optimal stopping from the ground up parallel to the control counterpart \citep{wang2020reinforcement,jia2022policy,jia2022policygradient,jia2023q},
we first transform the stopping problem into a stochastic control problem, inspired by \cite{dai2007intensity}. Indeed this transformation is motivated by the fact that
the variational inequality problem can be approximated by a ``penalized" PDE, whose solution converges to that of the original variational inequality problem as the penalty parameter approaches infinity \citep{friedman1982,forsyth2002quadratic,liang2007rate,peng2024dpm}. Moreover, the penalized PDE is actually the Hamilton--Jacobi--Bellman (HJB) equation of a stochastic bang-bang control problem. 

Once we have turned the problem into an RL problem for stochastic control, we can apply/adapt all the available results, methods, and algorithms developed for the latter right away. In particular, we take the exploratory formulation of \cite{wang2020reinforcement} involving control randomization and entropy regularization. Because
the control set now contains only two actions (corresponding to stop and continuation), the randomized control is a Bernoulli distribution. To wit, for exploring the environment there is no longer a clear-cut stop or continuation decision. Rather, the agent designs a (biased) coin for each time--state pair and makes the stopping decision based on the outcome of a coin toss.\footnote{Interestingly, in a seemingly different context (a casino gambling model originally proposed by \cite{barberis2012model} featuring behavioral finance theory), \cite{he2017path} and \cite{hu2023path} show that introducing coin-toss for making stopping decisions can on one hand strictly increase the cumulative prospect theory preference value and on the other hand make the problem analytically tractable. However, these papers stop short of explaining why people actually toss coins in reality. Exploration {\it \`a la} RL probably offers one explanation.}
Moreover, we derive a semi-analytical formula for the optimal Bernoulli distribution, which is crucial for policy parameterization and policy improvement in designing our RL algorithms. Built upon the martingale approach established by \cite{jia2022policy}, 
we design two RL algorithms: an offline martingale loss (ML) algorithm and an online TD(0) algorithm. \blue{Moreover, we provide a theoretical guarantee -- a fast convergence result of the policy improvement/iteration.
	
	We then conduct an extensive simulation study, applying these algorithms to several problems, including pricing finite-horizon American put options and  solving Merton's portfolio choice problem with transaction costs. More importantly, we experiment with two {\it high}-dimensional problems: American geometric average put options and optimal stopping for fractional Brownian motions. Through these experiments we demonstrate the effectiveness and efficiency of our reinforcement learning methods and algorithms, particularly their scalability to high-dimensional settings.}


This work is closely related to \cite{dong2023randomized}, the first paper in the literature to study optimal stopping within the continuous-time exploratory framework
of \cite{wang2020reinforcement}. 
However, there are stark differences.
First, we turn the stopping problem into a stochastic control problem using the penalty method, while \cite{dong2023randomized} treats the stopping problem directly by randomizing stopping times with a Cox process. The advantage of our approach, as already noted earlier, is that once in the control formulation, we can immediately apply all the available RL theories for controlled diffusions developed so far, instead of having to develop the corresponding theories for stopping from the get-go.
Second, the entropy regularizers are chosen differently. We adopt the usual Shannon differential entropy, also used by \cite{wang2020reinforcement} and its sequels, while \cite{dong2023randomized} employs 
the so-called unnormalized negentropy. See Section \ref{Subsection: compare} for a more detailed discussion on this point. Finally, in our experiments, we examine the performance of our methods with both online and offline algorithms, while \cite{dong2023randomized} only shows his offline results. 

Another recent relevant paper  is \cite{dianetti2024exploratory}.\footnote{The paper came to our attention shortly after the previous version of our paper was completed and posted.} That paper is also significantly different from ours in several fronts.
First, \cite{dianetti2024exploratory} use randomized stopping times to turn their optimal stopping into a {\it singular} control problem, and then apply the exploratory formulation to the latter which is probably more complex than the former.
By contrast, we do quite the opposite: in solving Merton's problem with transaction costs which is a singular control problem, we turn it into optimal stopping and then into regular control. The reason for us doing so is that singular control involves gradient constraints, which are always hard to treat. In contrast, optimal stopping, along with regular control, is much easier to deal with. That said, it is important to develop an RL theory for singular control (beyond the motivation for solving optimal stopping). In this regard, \cite{dianetti2024exploratory} obtain many results including those of theoretical convergence.
Second, in terms of algorithm design, the value function in \cite{dianetti2024exploratory} is approximated by Monte Carlo thus offline in nature, and their policy improvement involves  the {\it gradient} of the value function that requires very fine grids and a large amount of data to learn. In comparison, we
have both online and offline algorithms, which do not demand computing the aforementioned gradient.
Finally, \cite{dianetti2024exploratory} study stopping in infinite time horizon, while we deal with finite horizons. Besides the consideration of practical applicability, optimal stopping in a finite horizon is significantly  harder than its infinite horizon counterpart. 


This work is also related to a strand of papers on machine learning for optimal stopping (e.g., \citealp{becker2019deep,becker2020pricing,becker2021solving}; \citealp{reppen2022neural}; \citealp{herrera2024optimal}; \citealp{peng2024dpm}).
These papers focus on using deep neural nets to approximate the free boundaries and/or optimal stopping, significantly outside the entropy-regularized, exploratory framework of \cite{wang2020reinforcement} to which the present paper belongs.
\blue{The RL algorithms developed in this paper can also be applied to optimal stopping problems beyond the field of finance. For instance, the Stefan problem in physics, which describes the evolution of the boundary between two phases of a material undergoing a phase change \citep{wang2021deep}, can be formulated as an optimal stopping problem \citep{grigelionis1966stefan}.
In medicine, optimal stopping is relevant to problems  such as radiotherapy treatment, where the goal is to determine when to stop the current treatment scheme \citep{ajdari2019towards}.
Many other engineering problems also involve optimal stopping, such as deciding the best time to stop the production of a natural resource \citep{pham2009}, early stopping of model training in machine learning using Bayesian methods \citep{dai2019bayesian},
and classifying urban reports by
deciding when to stop adding more features once they no longer enhance classification accuracy \citep{liyanage2019automating}.
}

The remainder of the paper is organized as follows. Section \ref{Sec:Optimal stopping} transforms the optimal stopping problem with a finite horizon into a bang-bang stochastic control problem and then presents its exploratory formulation. In Section \ref{Section: RL algorithms}, we propose both the online and offline RL algorithms and provide a policy improvement theorem \blue{with a theoretical convergence rate}. Section \ref{Sec:Simulation} is devoted to simulation studies on pricing an American put option and on an investment problem with transaction costs\blue{, while Section \ref{Sec:Simulation in high-dim} is on two high-dimensional examples}. Finally, Section \ref{Sec:Conclusion} concludes. All the proofs and additional materials are presented in the appendices. 

\section{Optimal stopping in finite time horizon} \label{Sec:Optimal stopping}
We fix a filtered probability space $(\Omega, \mathcal{F}, \bp; \left\{\mathcal{F}_t\right\}_{t\geq 0})$ in which a standard $d$-dimensional Brownian motion $W=(W^{(1)},W^{(2)},\cdots,W^{(d)})$ is defined, along with a fixed finite time $T>0$.
Consider the following optimal stopping problem with a finite horizon
\begin{equation}\label{p1}
\max_{\tau\in[0,T]} \BE{ g(\tau, X_{\tau})}
\end{equation}
for an $n$-dimensional diffusion process $X=(X^{(1)},X^{(2)},\cdots,X^{(n)})$ driven by
\begin{equation}\label{process1}
\dd X^{(i)}_{t}=b_{i}(t,X_t)\dt+\sum_{j}\sigma_{i,j}(t,X_t)\dw^{(j)}_t.
\end{equation}
It is well-known that the HJB equation of the problem \eqref{p1} is given by
\begin{equation}\label{hjbp1}
\begin{cases}
	\max\{v_{t}+\opla v, g-v\}=0,\ (t,x)\in[0,T)\times\R^{n},\\
	v(T,x)=g(T,x),
\end{cases}
\end{equation}
where
\begin{equation}
\opla v(t,x)=\frac{1}{2}\sum_{i,j,k}\sigma_{i,k}(t,x)\sigma_{j,k}(t,x)\frac{\partial^2v }{\partial x_{i}\partial x_{j}}
+\sum_{i} b_{i}(t,x)\frac{\partial v}{\partial x_{i}}.
\end{equation}
The equation \eqref{hjbp1} is equivalent to a set of two inequalities and one equality, referred to as a {\it variational inequality} problem.
%

Throughout this paper, we assume all the coefficients are continuous in $t$ and Lipschitz continuous in $x$. Moreover, $\sigma \sigma'$ is uniformly bounded and positive definite,
where $\sigma=(\sigma_{i,j})_{i,j}$.

\subsection{Penalty approximation}

Consider the following penalty approximation problem for \eqref{hjbp1}:
\begin{equation}\label{hjb2}
\begin{cases}
	v_{t}+\opla v+K(g-v)^{+}=0,\ (t,x)\in[0,T)\times\R^{n},\\
	v(T,x)=g(T,x),
\end{cases}
\end{equation}
where $K$ is a positive scalar, called the \textit{penalty factor}.
The following result states the relationship between \eqref{hjb2} and \eqref{hjbp1}.
\begin{lemma} \label{lemma_K}
The solution of \eqref{hjb2} converges to that of \eqref{hjbp1} as $K\to+\infty$.
\end{lemma}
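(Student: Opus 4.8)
The plan is to prove convergence through a standard monotonicity-plus-stability argument for viscosity solutions, which is the cleanest route given that we only assume continuity/Lipschitz regularity of the coefficients and uniform ellipticity. First I would fix notation: let $v^K$ denote the (viscosity) solution of the penalized equation \eqref{hjb2} and $v$ the (viscosity) solution of the variational inequality \eqref{hjbp1}. Step one is to establish uniform-in-$K$ a priori bounds on $v^K$: using the assumed growth of $g$, a comparison argument against explicit super- and sub-solutions (e.g.\ a multiple of $1+|x|^2$ evolved by the operator $\partial_t+\opla$) gives $\|v^K\|_{L^\infty}$ and a local modulus of continuity that do not depend on $K$. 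Uniform ellipticity of $\sigma\sigma'$ is what lets us get equicontinuity in $x$, and continuity in $t$ follows from the barrier construction near $t=T$ together with the equation. By Arzel\`a--Ascoli we then extract a locally uniform limit $v^\infty$ along a subsequence $K\to\infty$.

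Step two is to identify the limit. Here I would use the stability of viscosity solutions under locally uniform convergence together with the structure of the penalization. Monotonicity: since $(g-v)^+\ge 0$, $v^K$ is a subsolution of $\partial_t v + \opla v \le 0$ uniformly in $K$, so in the limit $\partial_t v^\infty + \opla v^\infty \le 0$ in the viscosity sense. For the obstacle inequality $v^\infty \ge g$: at any point where $v^\infty < g$, the penalty term $K(g-v^K)^+$ blows up while $\partial_t v^K + \opla v^K$ stays bounded (again via the uniform estimates and the viscosity test-function machinery), forcing a contradiction; hence $v^\infty \ge g$. Finally, on the open set $\{v^\infty > g\}$ one shows the penalty term vanishes in the limit so that $\partial_t v^\infty + \opla v^\infty = 0$ there, which combined with the two inequalities gives exactly the complementarity form of \eqref{hjbp1}. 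The terminal condition passes to the limit since $v^K(T,\cdot)=g(T,\cdot)$ for every $K$.

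Step three is uniqueness: by the comparison principle for the variational inequality \eqref{hjbp1} (which holds in this uniformly parabolic setting with Lipschitz coefficients), the limit $v^\infty$ must coincide with $v$, and since the limit is independent of the subsequence, the whole family $v^K$ converges to $v$ as $K\to+\infty$. If one instead wants convergence in a classical or Sobolev sense (which is natural here because uniform ellipticity gives interior $W^{2,1}_p$ and $C^{1,\alpha}$ estimates for $v^K$ that are uniform in $K$), the same skeleton applies with Arzel\`a--Ascoli replaced by weak compactness in $W^{2,1}_p$, and one can additionally quote the known rate $\|v^K-v\|_{L^\infty}=O(K^{-1})$ from \cite{friedman1982,forsyth2002quadratic,liang2007rate}.

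The main obstacle I expect is Step two, specifically making the heuristic ``the penalty term forces $v^\infty\ge g$'' fully rigorous at the level of viscosity test functions rather than classical derivatives: one has to be careful that the bound on $\partial_t v^K + \opla v^K$ used to contradict the blow-up of $K(g-v^K)^+$ is genuinely uniform and survives the doubling-of-variables / semi-relaxed-limits argument. Everything else — the a priori bounds, the barrier at $t=T$, stability, and uniqueness — is routine given the standing assumptions, and in fact for the purposes of this paper it would be legitimate to simply invoke the cited PDE references \citep{friedman1982,forsyth2002quadratic,liang2007rate,peng2024dpm} for the convergence and reserve a short self-contained argument only if a specific regularity class is needed downstream.
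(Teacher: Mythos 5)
Your proposal is correct and its core identification step is the same as the paper's: both arguments read off the three complementarity conditions from the penalized equation --- the sign of the penalty term gives $v_t+\opla v\le 0$; dividing by $K$ and using local boundedness of $v_t+\opla v$ gives $(g-v)^+\to 0$, hence $v\ge g$ in the limit; and wherever $v>g$ strictly the penalty eventually vanishes so the PDE holds with equality. The difference is in the scaffolding. The paper's appendix proof is a purely formal classical-derivative computation: it takes a sequence $K_n\uparrow\infty$, \emph{assumes} that $v^{(n)}$ converges to some $v$ and that $a_n:=v_t^{(n)}+\opla v^{(n)}$ and $b_n:=g-v^{(n)}$ converge to their natural limits, and then verifies $a_\infty\le 0$, $b_\infty\le 0$, $a_\infty b_\infty=0$ exactly as you do in your Step two. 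It entirely omits your Step one (uniform a priori bounds, equicontinuity, Arzel\`a--Ascoli) and Step three (comparison/uniqueness to upgrade subsequential to full convergence), deferring the rigorous statement to the cited references where the $O(K^{-1})$ rate is obtained. Your version therefore buys a self-contained and rigorous proof (modulo the viscosity test-function details you flag, which are indeed the delicate point), at the cost of considerably more machinery; the paper's buys brevity by treating the convergence of $v^{(n)}$ and its derivatives as given, which is legitimate here since the lemma is standard and the full argument is in \cite{friedman1982,liang2007rate,peng2024dpm}.
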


A proof of this lemma can be found in the literature such as \cite{friedman1982}, 
\cite{liang2007rate}, and \cite{peng2024dpm}. Indeed, one can obtain the following estimate: 
\begin{equation}\label{v_os-v_PI}
0\leq \blue{v^{\text{os}}}(t,x) - v^{K}(t,x) \leq \frac{C}{K}, ~~\forall\; (t,x,K)\in[0,T]\times\R^{n}\times(0,\infty)
\end{equation}
under some technical conditions, where \blue{$v^{\text{os}}$} and $v^{K}$ are respectively the solutions to \eqref{hjbp1} and \eqref{hjb2}, and $C$ is a positive constant independent of $K$.
For reader's convenience we provide a proof of Lemma \ref{lemma_K} in Appendix \ref{Appendix: proof_penalty}.

Lemma \ref{lemma_K} suggests that solving \eqref{hjbp1} reduces to solving \eqref{hjb2} with a sufficiently large $K$. In particular, numerically one can simply solve the latter to get an approximated solution to the former.
We can rewrite \eqref{hjb2} as the following PDE involving a binary optimization term
\begin{equation} \label{hjbp2}
\begin{aligned}
	0 &=v_{t}+\opla v+K(g-v)^{+}
	=v_{t}+\max_{u\in\{0,1\}}\{\opla v-Kuv+Kg(t,x)u\}.
\end{aligned}	
\end{equation}


We are now going to show in the next subsection that this PDE corresponds to some (unconventional) stochastic control problem.

\subsection{Transformation to optimal control}\label{Sec:transform_to_control}

Before we further analyze \eqref{hjbp2}, let us first consider the following general stochastic control problem
\begin{equation}\label{pp1}
v(s,x)=\max_{u\in\setu} J(s,x; u),
\end{equation}
where the controlled $n$-dimensional process $X_t$ follows
\begin{equation}\label{xpp1a}
\dd X_t=\tilde b(t, X_t, u_{t})\dt+\tilde \sigma(t,X_t, u_{t})\dw_t,
\end{equation}
and the objective functional (also called the reward function in the RL literature) is defined as
\begin{equation*}
J(s,x;u)=\BE{\int_s^{T} e^{-\int_{s}^{t}f(r, X_{r}, u_{r})\dr}G(t, X_t, u_{t})\dt
	+e^{-\int_{s}^{T}f(r, X_{r}, u_{r})\dr}H(X_T)\;\bigg|\; X_s=x}.
	\end{equation*}
	Here $\setu$ denotes the set of admissible controls $u:[0,T]\times \Omega\to U$ which are progressively measurable such that \eqref{xpp1a} admits a unique strong solution and $J(s,x;u)$ is finite, while $U\subseteq\R^{m}$ is a given closed control constraint set.
	\revise{In addition, $G:[0,T]\times \R^{n}\times U \to \R$ and $H:\R^{n} \to \R$ represent the running reward and terminal reward terms, respectively, in the objective functional $J$ of the control problem \eqref{pp1}, while $f:[0,T]\times \R^{n}\times U \to \R$ serves as a discount factor.}
	
	
	Note that this is not a conventional stochastic control problem (e.g., \citealp{YZ99}) as it involves control-dependent discount factors. However, we still have the following HJB equation and verification theorem.
	\begin{theorem}[Verification theorem]\label{vf}
Suppose $v$ is a classical solution of
\begin{equation}\label{hjbpp1}
	\begin{cases}
		v_{t}+\max_{u\in U}\left\{\opla_{u} v-vf+G\right\}=0, ~~ (t,x)\in[0,T)\times\R^{n},\\
		v(T,x)=H(x),
	\end{cases}
\end{equation}
where $$\opla_{u} v=\frac{1}{2}\sum_{i,j,k}\tilde\sigma_{i,k}(t,x,u)\tilde\sigma_{j,k}(t,x,u)\frac{\partial^2v }{\partial x_{i}\partial x_{j}}
+\sum_{i} \tilde b_{i}(t,x,u)\frac{\partial v}{\partial x_{i}}.$$
Then it is the optimal value function of problem \eqref{pp1}.
\end{theorem}

\begin{remark}
If $f$ is a constant, then the above result reduces to the classical verification theorem.
One can even establish a similar result when a term like $e^{-\int_{s}^{t}f(r, X_{r}, u_{r})\dr-\int_{s}^{t}g(r, X_{r}, u_{r})\dd W_r}$
is in place of
$e^{-\int_{s}^{t}f(r, X_{r}, u_{r})\dr}$
in the reward functional $J$.
\end{remark}

By \citethm{vf}, the PDE \eqref{hjbp2} is indeed the HJB equation of the following stochastic control problem:
\begin{equation}\label{p2}
v(s,x)=\max_{u_{t}\in\{0,1\}} J(s,x;u),
\end{equation}
where the (uncontrolled) process $X_t$ is driven by \eqref{process1} and
\begin{equation}\label{value_func_control}
J(s,x;u)=\BE{\int_s^{T} Ke^{-K\int_{s}^{t}u_{r}\dr}g(t,X_t)u_{t}\dt
	+e^{-K\int_{s}^{T}u_{r}\dr}g(T,X_T)\;\bigg|\; X_s=x}.
	\end{equation}
	
	Applying \citethm{vf} yields
	\begin{lemma} \label{lemma_control}
Suppose $v$ is a classical solution of \eqref{hjbp2} with terminal condition $v(T,x)=g(T,x)$. Then it is the value function of problem \eqref{p2}.
\end{lemma}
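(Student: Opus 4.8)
The plan is to obtain Lemma \ref{lemma_control} as an immediate corollary of the verification theorem \citethm{vf}, applied to a particular instance of the general control problem \eqref{pp1}. First I would match the data: take the drift and diffusion to be control-independent, $\tilde b(t,x,u)=b(t,x)$ and $\tilde\sigma(t,x,u)=\sigma(t,x)$, so that the state equation \eqref{xpp1a} reduces to the uncontrolled process \eqref{process1} and $\opla_u=\opla$ for every $u$; take the control constraint set $U=\{0,1\}\subset\R$ (which is closed, as \citethm{vf} requires); set the control-dependent discount rate $f(r,x,u)=Ku$, the running reward $G(t,x,u)=Kg(t,x)u$, and the terminal reward $H(x)=g(T,x)$. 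Under these choices the functional $J$ of \eqref{pp1} becomes verbatim \eqref{value_func_control} and the admissible set $\setu$ becomes the set of $\{0,1\}$-valued progressively measurable processes, so that \eqref{pp1} is precisely problem \eqref{p2}.

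Next I would verify that, for this instance, the HJB equation \eqref{hjbpp1} of \citethm{vf} is nothing but \eqref{hjbp2} with terminal condition $v(T,x)=g(T,x)$. Since $\opla_u=\opla$, the Hamiltonian is $\opla v+\max_{u\in\{0,1\}}\{-Kuv+Kg(t,x)u\}$, which matches the bracket in \eqref{hjbp2} term by term; and the elementary identity $\max_{u\in\{0,1\}}Ku(g-v)=K(g-v)^{+}$ (valid because $K>0$) shows this equals $\opla v+K(g-v)^{+}$, recovering \eqref{hjb2}=\eqref{hjbp2}. The terminal data agree since $H(x)=g(T,x)$. Hence the hypothesis of Lemma \ref{lemma_control} --- that $v$ is a classical solution of \eqref{hjbp2} with $v(T,\cdot)=g(T,\cdot)$ --- is exactly the hypothesis of \citethm{vf} for this instance, and applying \citethm{vf} gives that $v$ is the value function of \eqref{p2}.

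The remaining work is to confirm the standing admissibility and integrability conditions built into \eqref{pp1} and \citethm{vf}. The SDE \eqref{process1} has a unique strong solution because $b,\sigma$ are continuous in $t$, Lipschitz in $x$, and independent of $u$, and every $\{0,1\}$-valued progressively measurable process is admissible, so $\setu\neq\emptyset$. For finiteness of $J$, the key observation is that the weight $t\mapsto Ku_te^{-K\int_s^tu_r\dr}$ is nonnegative with $\int_s^TKu_te^{-K\int_s^tu_r\dr}\dt=1-e^{-K\int_s^Tu_r\dr}\in[0,1]$, while $e^{-K\int_s^Tu_r\dr}\in(0,1]$, so that $|J(s,x;u)|\leq\BE{\sup_{t\in[s,T]}|g(t,X_t)|\;\big|\;X_s=x}$, which is finite under the growth conditions on $g$ in force (e.g., polynomial growth). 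The same sub-probability weighting, together with polynomial growth of the classical solution $v$, is what is needed in the proof of \citethm{vf} to upgrade the relevant local martingale to a true martingale. I do not anticipate a genuine obstacle here: the only point requiring a little care is that the discount factor now depends on the control, but this has already been absorbed into \citethm{vf}, so for Lemma \ref{lemma_control} itself the argument is essentially a matter of bookkeeping.
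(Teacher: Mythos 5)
Your proposal is correct and follows exactly the paper's route: the paper obtains Lemma \ref{lemma_control} as an immediate instance of Theorem \ref{vf} with $f=Ku$, $G=Kg u$, $H=g(T,\cdot)$, uncontrolled dynamics, and $U=\{0,1\}$, using the same identity $\max_{u\in\{0,1\}}Ku(g-v)=K(g-v)^{+}$ to match \eqref{hjbpp1} with \eqref{hjbp2}. Your additional bookkeeping on admissibility and the sub-probability weighting is a sound elaboration of details the paper leaves implicit.
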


Even though we have established the theoretical dynamic programming result for the non-conventional control problem \eqref{p2}, it is beneficial (to our subsequent analysis) to present an alternative conventional formulation. This is achieved by introducing an additional state.

Recalling  that $X_t$ is driven by \eqref{process1}, we introduce a new controlled state equation $\dd R_{t}=-KR_{t}u_{t}\dt$ or $R_t=R_se^{-K\int_{s}^{t}u_{r}\dr}$ for $t\geq s$.
Let
\begin{equation}\label{p2b}
\barv(s,x,r)=\max_{u_{t}\in\{0,1\}} \barJ(s, x, r; u),
\end{equation}
where
\begin{equation*}
\barJ(s, x, r; u)=\BE{\int_s^{T} KR_{t}g(t,X_t)u_{t}\dt+R_{T}g(T,X_T)\;\bigg|\; X_s=x,R_{s}=r}.
\end{equation*}
Then clearly problem \eqref{p2} has the optimal value $v(s,x)=\barv(s,x,1).$

As is well known, the HJB equation for the classical control problem \eqref{p2b} is given by
\begin{equation}\label{hjbp2b}
\begin{cases}
	\displaystyle{	\barv_{t}+\max_{u\in\{0,1\}} \Big[\opla \barv- Kru\frac{\partial \barv}{\partial r}+Krg(t,x)u\Big]=0,\ (t,x,r)\in[0,T)\times\R^{n}\times(0,\infty),}\\
	\barv(T,x,r)=rg(T,x),~~
	\barv(t,x,0)=0.
\end{cases}
\end{equation}

Clearly, for $r>0$, $\barJ(s, x, r; u)=r \barJ(s, x, 1; u)$;
so $\barv(s,x, r)=r\barv(s,x,1)=rv(s,x). $
Substituting this into \eqref{hjbp2b}, we get, for $ (t,x)\in[0,T)\times\R^{n}$,
\begin{equation}\label{hjbp2b2}
\begin{cases}
	v_{t}+\max\limits_{u\in\{0,1\}} \Big[\frac{1}{2}\sum_{i,j,k}\sigma_{i,k}(t,x)\sigma_{j,k}(t,x)\frac{\partial^2v }{\partial x_{i}\partial x_{j}}
	+\sum_{i} b_{i}(t,x)\frac{\partial v}{\partial x_{i}}- Kuv+Kg(t,x)u\Big]=0,\qquad\\
	v(T,x)=g(T,x),
\end{cases}
\end{equation}
which recovers \eqref{hjbp2}. In this sense, the two problems \eqref{p2b} and \eqref{p2} are equivalent.



Finally it is clear from \eqref{hjbp2} that the control/action $u=1$ (resp. 0) for \eqref{p2b} corresponds to stop (resp. continue) for the original stopping problem.

\subsection{RL formulation}

Now we are ready to present the RL formulation for  problem \eqref{p2b} by considering
the entropy-regularized, exploratory version of the problem, which was
first introduced by \cite{wang2020reinforcement} and then studied in many continuous-time RL examples \citep{wang2020continuous, jiang2022reinforcement, jia2022policygradient, jia2023q, dong2023randomized, dai2023learning}. A special feature of the current problem is that, because the action set has only two points, 0 and 1, the exploratory control process $\pi$ is a Bernoulli-distribution valued process with $\BP{u_{t}=1}=\pi_{t}=1-\BP{u_{t}=0}$. When exercising such a control, at any $t$ the agent flips a coin having head with probability $\pi_t$ and tail with probability $1-\pi_t$, and stops if head appears and continues otherwise.

With a slight abuse of notation, henceforth we still use $(X,R)$ to denote the exploratory states. Since $X$ is not controlled, it is still driven by \eqref{process1} but $R$ now satisfies
\begin{equation}\label{dynamics_R}
\dd R_{t}=-K R_{t}\pi_{t}\dt.
\end{equation}
Then the entropy-regularized exploratory counterpart of problem \eqref{p2b} is
\begin{equation}\label{problem_high_dim}
\barv^{\boldsymbol{\Pi}}(s,x,r )=\max_{\pi} \barJ^{\pi}(s, x, r),
\end{equation}
where the superscript $\boldsymbol{\Pi}$ is a reminder that we are currently considering an exploratory problem with control randomization and entropy regularization, and the maximization is over all Bernoulli-distribution valued processes taking values in $\{0,1\}$. The value function is 
\begin{equation}\label{Jpi}
\barJ^{\pi}(s, x, r)=\BE{\int_s^{T} R_{t}\big[Kg(t,X_t)\pi_{t} - \lam \BH(\pi_{t})\big]\dt+R_{T}g(T,X_T) \;\bigg|\; X_s=x,R_{s}=r},
\end{equation}
where
\begin{equation}
\BH(\pi)=\pi \log \pi+(1-\pi)\log\left(1-\pi\right).
\end{equation}
In the above, $-\BH(\pi)$ is the differential entropy of the Bernoulli distribution $\pi$, and $\lam>0$ is the \textit{exploration weight} or \textit{temperature parameter} representing the trade-off between exploration and exploitation.

Note that the formulation  \eqref{Jpi} does not follow strictly the general formulation in \cite{wang2020reinforcement} and those in the subsequent works, because the running entropy regularization term in \eqref{Jpi}  is $-R_t\BH(\pi_t)$ instead of $-\BH(\pi)$. There are two reasons behind this modification. First, 
{$\barv(t,x,0)=0$}
in \eqref{hjbp2b} for both current and future time--state pairs $(t,x)$
motivates us to expect that
{$\barv^{\boldsymbol{\Pi}}(t,x,0)=0$} holds as well, which would be untrue if we took $-\BH(\pi)$ as the regularizer.
Second, with the modification we can reduce the dimension of the problem as follows.
Taking the expression of $R_{t}$ into $\barJ^{\pi}(s, x, r)$ to get
\begin{equation}\label{p3}
v^{\boldsymbol{\Pi}}(s,x)=\max_{\pi_{t}\in[0,1]} J^{\pi}(s, x),
\end{equation}
where $	J^{\pi}(s,x)$ defined below does not rely on $R_{t}$ anymore:
\begin{equation}\label{value_func_control_entropy}
J^{\pi}(s,x)=\BE{\int_s^{T} e^{-K\int_{s}^{t}\pi_{v}\dv}\big[K g(t,X_t) \pi_{t}-\lam \BH(\pi_{t})\big]\dt+e^{-K\int_{s}^{T}\pi_{v}\dv}g(T,X_T) \;\bigg|\; X_s=x }.
\end{equation}
Note that we have the following relationships:
\begin{equation}\label{linear_homo_value_func}
\barJ^{\pi}(s, x, r)=rJ^{\pi}(s,x) \ \text{ and }\ v^{\boldsymbol{\Pi}}(s,x)=\frac{1}{r}\barv^{\boldsymbol{\Pi}}(s, x, r)=\barv^{\boldsymbol{\Pi}}(s, x, 1)\ \text{ for all }\ r>0.
\end{equation}
Meanwhile, the optimal strategies of \eqref{problem_high_dim} and \eqref{p3} are identical, which will be shown shortly.
Consequently, in the RL algorithms in the next section, to learn the optimal value function $v^{\boldsymbol{\Pi}}(s,x)$ of the problem \eqref{p3}, we will start from the problem \eqref{problem_high_dim} with initial state $R_0=1$.

By \citethm{vf}, the HJB equation of the problem \eqref{p3} is
\begin{equation}\label{hjbp3}
v_{t}+\opla v+\max_{\pi\in [0,1]}\big\{-K\pi v+Kg \pi - \lam (\pi\log \pi+(1-\pi)\log (1-\pi))\big\}=0,\ (t,x)\in[0,T)\times\R^{n}.	
\end{equation}
Since $\pi\mapsto -K\pi v+Kg \pi -\lam (\pi\log \pi+(1-\pi)\log (1-\pi))$
is strictly concave, the unique maximizer $\pi^{*}$ in the above is determined by
the first order condition:
$-Kv+Kg-\lam (\log \pi^{*}- \log (1-\pi^{*}))=0$, or
\begin{equation}\label{equ update pi}
\pi^{*}
=\frac{1}{1+\exp(\lam^{-1}K(v-g))}.
\end{equation}
%
%
Using this, the HJB equation \eqref{hjbp3} reduces to
\begin{equation}\label{hjb5}
\exp(-\lam^{-1} (v_{t}+\opla v))=\exp(\lam^{-1} (-Kv+Kg))+1.
\end{equation}

By \citethm{vf}, we have
\begin{lemma}
Suppose $v$ is a classical solution of \eqref{hjb5} satisfying the terminal condition $v(T,x)=g(T,x)$. Then it is the value function of the problem \eqref{p3}.
\end{lemma}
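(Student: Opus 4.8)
The plan is to reduce the claim to \citethm{vf} (the verification theorem) by showing that equation \eqref{hjb5} is, after reversing the algebraic manipulations in the text, exactly the HJB equation \eqref{hjbp3}, and that \eqref{hjbp3} is an instance of \eqref{hjbpp1} for the appropriately identified data. I would organize the proof as a short chain of equivalences followed by an invocation of \citethm{vf}.

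First I would recover \eqref{hjbp3} from \eqref{hjb5}. Starting from $\exp(-\lam^{-1}(v_t+\opla v)) = \exp(\lam^{-1}(-Kv+Kg))+1$, take logarithms and multiply by $-\lam$ to get $v_t+\opla v = -\lam\log\big(1+\exp(\lam^{-1}(-Kv+Kg))\big)$. Then I would verify that the right-hand side equals the value of the strictly concave maximization $\max_{\pi\in[0,1]}\{-K\pi v + Kg\pi - \lam(\pi\log\pi+(1-\pi)\log(1-\pi))\}$. This is the standard Gibbs variational identity: the first-order condition gives the maximizer \eqref{equ update pi}, namely $\pi^* = (1+\exp(\lam^{-1}K(v-g)))^{-1}$, and substituting $\pi^*$ back into the objective yields exactly $\lam\log(1+\exp(\lam^{-1}K(g-v)))$; I would note that $\log(1+e^{a})$ and $-\log(1+e^{-a})$ differ by $a$, which reconciles the two sign conventions and confirms the identity. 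Hence \eqref{hjb5} with terminal condition $v(T,x)=g(T,x)$ is equivalent to \eqref{hjbp3} with the same terminal condition.

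Next I would match \eqref{hjbp3} with the abstract HJB equation \eqref{hjbpp1}. Take the control set $U=[0,1]$, drift $\tilde b(t,x,\pi)=b(t,x)$ and diffusion $\tilde\sigma(t,x,\pi)=\sigma(t,x)$ (both independent of the control, so $\opla_\pi = \opla$), discount rate $f(t,x,\pi)=K\pi$, running reward $G(t,x,\pi)=Kg(t,x)\pi - \lam\BH(\pi)$, and terminal reward $H(x)=g(T,x)$. With these identifications, the functional in \citethm{vf} is precisely $J^\pi(s,x)$ of \eqref{value_func_control_entropy}, and \eqref{hjbpp1} becomes \eqref{hjbp3}. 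I should check admissibility/regularity: $f$ is bounded (since $\pi\in[0,1]$ and $K$ fixed), $G$ is continuous and bounded on $[0,T]\times\R^n\times[0,1]$ after noting $\BH$ is bounded on $[0,1]$, and the standing assumptions on $b,\sigma$ (continuity in $t$, Lipschitz in $x$, $\sigma\sigma'$ bounded and positive definite) guarantee strong existence/uniqueness of \eqref{process1} and finiteness of $J^\pi$; under these conditions a classical solution $v$ of \eqref{hjbp3} falls within the scope of \citethm{vf}. Applying \citethm{vf} then gives $v=v^{\boldsymbol\Pi}$, the value function of \eqref{p3}.

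The genuinely substantive step is the variational identity reconciling \eqref{hjb5} with the $\max$ in \eqref{hjbp3}; everything else is bookkeeping of the identifications and verifying the hypotheses of \citethm{vf} hold in this setting. I expect the main subtlety, though a minor one, to be the sign/offset bookkeeping between $\exp(\lam^{-1}(g-v)\cdot K)$ and $\exp(\lam^{-1}(v-g)\cdot K)$ when passing between the log-sum-exp form and the explicit maximizer $\pi^*$ — one must be careful that the boundary behavior of $\pi\log\pi+(1-\pi)\log(1-\pi)$ at $\pi\in\{0,1\}$ (where it vanishes) does not create spurious boundary maximizers, which is ruled out by strict concavity in $\pi$ and the interior location of $\pi^*$.
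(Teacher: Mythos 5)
Your proof is correct and follows essentially the same route as the paper, which simply invokes \citethm{vf} after noting that \eqref{hjb5} is the reduced form of the HJB equation \eqref{hjbp3} under the maximizer \eqref{equ update pi}; your explicit verification of the log-sum-exp identity and of the hypotheses of the verification theorem just fills in details the paper leaves implicit.
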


%

Following the same procedure, the HJB equation of  \eqref{problem_high_dim} is
\begin{equation}
\begin{aligned}
	\barv_{t}+\opla \barv+\max_{\bar{\pi}\in [0,1]}r\Big\{-K\bar{\pi} \barv_r+Kg \bar{\pi} -\lam (\bar{\pi}&\log \bar{\pi}+(1-\bar{\pi})\log (1-\bar{\pi}))\Big\}=0,
\end{aligned}	
\end{equation}
where    $(t,x, r)\in[0,T)\times\R^{n}\times [0,\infty)$,
and the unique maximizer in above is
\begin{equation}
\bar{\pi}^{*}=\frac{1}{1+\exp(\lam^{-1}K(\barv_r-g))}.
\end{equation}
By \eqref{linear_homo_value_func}, $\barv_r^{\boldsymbol{\Pi}}(s,x,r)=v^{\boldsymbol{\Pi}}(s,x)$,
therefore, $\bar{\pi}^{*}(s,x,r)=\pi^{*}(s,x)$,
which shows that the optimal strategies of the problems \eqref{problem_high_dim} and \eqref{p3} are the same.

\blue{
We end this section by providing an error analysis arising from the penalty approximation (in terms of $K$) and the
exploratory framework (in terms of $\lambda$).
It is immediate that the optimal value function $v^{K}$ of the penalty approximation (which satisfies \eqref{hjbp2}) also satisfies \eqref{hjbp3}  with $\lam=0$, while the optimal value function $v^{\boldsymbol{\Pi}}$ of the exploratory control problem \eqref{p3} satisfies \eqref{hjbp3} with a positive $\lambda$.
By the maximum principle, it is straightforward that $v^{K}\leq v^{\boldsymbol{\Pi}}$.

Next, we provide a lower bound of $v^{K}$.
Consider the function $\tilde{v}^{\lam}:=v^{K}+\lam(T-t)\log 2$, which coincides with $v^{\boldsymbol{\Pi}}$ when $t=T$.
Substituting $\tilde{v}^{\lam}$ into the LHS of \eqref{hjbp3} yields:
\begin{align*}
	& \tilde{v}^{\lam}_t + \mathcal{L}\tilde{v}^{\lam} + \max_{\pi \in [0,1]} \left\{ -K\pi \tilde{v}^{\lam} + Kg\pi - \lambda \big( \pi \log \pi + (1-\pi) \log(1-\pi) \big) \right\}\\
	= &\ v^{K}_t + \mathcal{L}v^{K} -\lam\log 2\\&\quad+ \max_{\pi \in [0,1]} \left\{ -K\pi v^{K} - K\pi\lam(T-t)\log 2 + Kg\pi - \lambda \big( \pi \log \pi + (1-\pi) \log(1-\pi) \big) \right\}
	\\
	\leq &\  v^{K}_t + \mathcal{L}v^{K} + \max_{\pi \in [0,1]} \left\{ -K\pi v^{K}  + Kg\pi  \right\} + \max_{\pi \in [0,1]}\left\{- K\pi\lam(T-t)\log 2\right\} \\
	&\quad + \max_{\pi \in [0,1]}\left\{- \lambda \big( \pi \log \pi + (1-\pi) \log(1-\pi) \big)-\lam\log 2\right\}
	\leq\  0,
\end{align*}
where we make use of the facts that $v^{K}_t + \mathcal{L}v^{K} + \max_{\pi \in [0,1]} \left\{ -K\pi v^{K}  + Kg\pi  \right\}=0$ and $-\big( \pi \log \pi + (1-\pi) \log(1-\pi) \big)\leq \log 2$. Thus, again by the maximum principle, we have that $\tilde{v}^{\lam}\geq v^{\boldsymbol{\Pi}}$, i.e., $v^{K}\geq v^{\boldsymbol{\Pi}}-\lam(T-t)\log 2$.

\begin{theorem}\label{Thm:Errors}
	For any fixed penalty factor $K$, we have
	\begin{equation*}
		v^{\boldsymbol{\Pi}}-\lam(T-t)\log 2\leq v^{K}\leq v^{\boldsymbol{\Pi}},
	\end{equation*}
	where $v^{K}$ and $v^{\boldsymbol{\Pi}}$ satisfy \eqref{hjbp2} and \eqref{hjbp3}, respectively.
	Combined with \eqref{v_os-v_PI}, we have 
	\begin{equation*}
		-\lam (T-t)\log 2\leq  v^{\text{os}} - v^{\boldsymbol{\Pi}}\leq \frac{C}{K},
	\end{equation*}
	where $v^{\text{os}}$ satisfying \eqref{hjbp1} is the optimal value function of the original optimal stopping problem \eqref{p1}, and $C$ is a positive constant independent of $\lambda$ and $K$.
\end{theorem}

As a consequence, as we increase the penalty factor $K$ to infinity and decrease the temperature parameter $\lam$ to zero, $v^{\boldsymbol{\Pi}}$ converges uniformly to $v^{\text{os}}$.

}

\section{Reinforcement learning algorithms}\label{Section: RL algorithms}

We discretize the time horizon $\left[0, T\right]$ into a series of equally spaced time intervals $\left[t_{l-1}, t_l\right]$, $1\leq l \leq L$ with $t_l=l\Delta t$. Given a control policy $\pi$, approximate its value function of problem \eqref{p3} at time $t_l$, i.e., $J^{\pi}\left(t_l,\cdot\right)$, by a neural network (NN) with parameter $\theta_l\in\mathbb{R}^{N_{\theta_l}}$, where $N_{\theta_l}$ represents the dimension of $\theta_l$, for $l=0,\ldots, L-1$.\footnote{For problems with special structures, we can exploit them to directly parameterize the value function instead of resorting to the NN, which could reduce the complexity greatly. For example, \cite{wang2020continuous} and \cite{jiang2022reinforcement} use analytical forms of the value functions for parameterizing a pre-committed dynamic mean-variance problem and a dynamic terminal log utility problem, respectively.}
We denote by $V_{\theta_l}(\cdot)$ this approximated value function at time $t_l$.
Besides, at $t_L=T$ the value function is simply $V_{\theta_L}(\cdot)=g(T,\cdot)$.
Finally, \eqref{linear_homo_value_func} yields the corresponding approximation of the value function of the problem \eqref{problem_high_dim}.

A typical RL task consists of two steps: policy evaluation and policy improvement. Starting from an initial (parameterized) policy $\pi^{(0)}$, we need to learn the value function under $\pi^{(0)}$ to get an evaluation of this policy. The next step is to find a direction to update the policy to $\pi^{(1)}$ such that the value function under $\pi^{(1)}$ is improved compared to that under $\pi^{(0)}$. We repeat this procedure until optimality or near-optimality is achieved.

The second step, policy improvement, is based on the following theoretical result (which is analogous to \eqref{equ update pi}).

\begin{theorem}[Policy improvement theorem]\label{policy improvement}
Given an admissible policy $\pi$ together with its value function $J^{\pi}\left(\cdot,\cdot\right)$, let
\begin{equation}\label{tilde_pi}
	\tilde{\pi}(t,x)=\frac{1}{1+\exp\left(\lam^{-1}K\left(J^{\pi}\left(t,x\right)-g(t,x)\right)
		\right)}.
\end{equation}
Then $J^{\pi}\left(t,x\right)\leq J^{\tilde{\pi}}\left(t,x\right)$ for all $(t,x)\in [0,T]\times \R^{n}$.
	\end{theorem}
	
	So for the current problem, the policy improvement can be computed {\it analytically} provided that $J^{\pi}\left(\cdot,\cdot\right)$ is accessed for any $\pi$. Thus, solving the problem boils down to the first step, policy evaluation.\footnote{\blue{Note that this shortcut is specific to the current problem and the approach we take.}}
	For that step, we take the martingale approach for continuous-time RL developed in \cite{jia2022policy}.
	
	First, we consider an offline algorithm. Following \cite{jia2022policy}, denote
	\begin{equation*}
\mathcal{M}_t:=\barJ^{\pi}(t, X_t, R_t)+\int_0^{t} R_{s}\big[Kg(s,X_s)\pi_{s}- \lam \BH(\pi_{s})\big]\ds,
\end{equation*}
which is a martingale. 
Noting $\barJ^{\pi}(T, X_T, R_T)=R_{T}g(T,X_T)$, \eqref{linear_homo_value_func} and the NN approximation, we get
\begin{equation*}
\begin{aligned}
	\mathcal{M}_T-\mathcal{M}_t &=\barJ^{\pi}(T, X_T, R_T)- \barJ^{\pi}(t, X_t, R_t)+\int_t^{T} R_{s}\big[Kg(s,X_s)\pi_{s}- \lam \BH(\pi_{s})\big]\ds\\
	&=R_{T}g(T,X_T) - R_t J^{\pi}(t, X_t)+\int_t^{T} R_{s}\big[Kg(s,X_s)\pi_{s}- \lam \BH(\pi_{s})\big]\ds\\
	&=R_{T}g(T,X_T) - R_t V_{\theta}(X_t)+\int_t^{T} R_{s}\big[Kg(s,X_s)\pi_{s}- \lam \BH(\pi_{s})\big]\ds.
\end{aligned}
\end{equation*}
%
The martingale loss function is thus
\begin{equation}
\begin{aligned}\label{martingale_loss}
	\text{ML}\left(\left\{\theta_l\right\}_{l=0}^{L-1}\right)
	&=\frac{1}{2}\mathbb{E}\left[\int_0^T|\mathcal{M}_T-\mathcal{M}_t|^2dt\right]
	\approx:\frac{1}{2}\mathbb{E}\bigg[ \sum_{l=0}^{L-1}G_l^2 \Delta t\bigg],
\end{aligned}
\end{equation}
where
\begin{equation}
G_l=R_{T}g(T,X_T) - R_{t_l} V_{\theta_l}(X_{t_l})+\sum_{j=l}^{L-1} R_{t_j}\big[Kg(t_j,X_{t_j})\pi_{t_j}- \lam \BH(\pi_{t_j})\big]\Delta t. \label{ML_component}
\end{equation}
For states at discretized time points, we adopt the forward Euler scheme as follows,
\begin{equation}
X_{t_{l+1}}=X_{t_l}+b(t_l,X_{t_l})\Delta t+\sigma(t_l,X_{t_l})\Delta W\quad \text{with } \Delta W\sim\mathcal{N}(0,\Delta t),\label{discrete X}
\end{equation}
and
\begin{equation}
R_{t_{l+1}}=R_{t_l}\left(1-K\pi_{t_l}\Delta t\right)\quad \text{for }l=0,\ldots,L-1.\label{discrete R}
\end{equation}

It is obvious that to minimize the martingale loss, the state trajectories in the whole time horizon are required, which leads to the
{\it offline} RL Algorithm \ref{algorithm_ML}.
During training, as long as we can collect the realized outcomes of the payoff function $g$, referred to  as the {\it reward signals}  from the environment, the algorithm is applicable even if we do not know the functional form of $g$.\footnote{In some situations, we may also be able to learn  $g$ through parameterizations. For instance, in the next section we will show that when $g$ is the payoff function corresponding to the early exercise premium of an American put option, whose exact value is dependent on the unknown market volatility, an accurate approximation of $g$ is available upon training by the martingale approach.}

For {\it online} learning, we apply the martingale orthogonality condition $\mathbb{E}\left[\int_0^T\xi_t \text{d} \mathcal{M}_t\right]=0$
for any test function $\xi\in L_{\mathcal{F}}^2\left([0,T],\mathcal{M}\right)$, again proposed by \cite{jia2022policy}, where
\begin{align*}
&\quad L_{\mathcal{F}}^2\left([0,T],\mathcal{M}\right)\\&:=\left\{\kappa=\left\{\kappa_t, 0\leq t\leq T\right\}: \kappa \text{ is $\mathcal{F}_t$-progressively measurable and } \mathbb{E}\int_0^T |\kappa_t|^2 \text{d} \langle \mathcal{M}\rangle_t < \infty \right\}.
\end{align*}
In particular, a TD(0)-type algorithm results  from choosing $\xi_t=\frac{\partial \barJ^{\pi}(t, X_t, R_t)}{\partial \theta}$. The corresponding online updating rule is
$$\theta \leftarrow \theta+\alpha \frac{\partial \barJ^{\pi}(t, X_t, R_t)}{\partial \theta} \text{d}\mathcal{M}_t,$$
where $\alpha$ is the learning rate.
For actual implementation where time is discretized, we have at time $t_l$:
\begin{equation*}
\begin{aligned}
	&\quad \frac{\partial \barJ^{\pi}(t_l, X_{t_l}, R_{t_l})}{\partial \theta_l} \text{d} \mathcal{M}_{t_l} \\
	&\approx\frac{\partial \left(R_{t_l}V_{\theta_l}(X_{t_l})\right)}{\partial \theta_l} \left\{R_{t_{l+1}}V_{\theta_{l+1}}(X_{t_{l+1}}) - R_{t_l}V_{\theta_l}(X_{t_l})+R_{t_l}\left[Kg\left(t_l,X_{t_l}\right)\pi_{t_l}-\lambda\BH(\pi_{t_l})\right]\Delta t\right\}\\
	& \propto \frac{\partial V_{\theta_l}(X_{t_l})}{\partial \theta_l}\left\{\frac{R_{t_{l+1}}}{R_{t_l}}V_{\theta_{l+1}}(X_{t_{l+1}}) - V_{\theta_l}(X_{t_l})+
	\left[Kg(t_l,X_{t_l})\pi_{t_l} - \lambda \BH(\pi_{t_l})\right]\Delta t \right\}.
\end{aligned}
\end{equation*}
Consequently, the updating rule in our online algorithm is
\begin{equation}\label{update_TD0}
\theta_l \leftarrow \theta_l+\alpha \frac{\partial V_{\theta_l}(X_{t_l})}{\partial \theta_l}\left\{\frac{R_{t_{l+1}}}{R_{t_l}}V_{\theta_{l+1}}(X_{t_{l+1}}) - V_{\theta_l}(X_{t_l})+
\left[Kg(t_l,X_{t_l})\pi_{t_l} - \lambda \BH(\pi_{t_l})\right]\Delta t\right\}
\end{equation}
for $l=0,\ldots,L-1$.
Algorithm \ref{algorithm_TD0} presents the online learning procedure.

\begin{algorithm}\label{algorithm_ML}
\caption{Offline ML}
\textbf{Inputs:} initial states $x_0$ and $r_0=1$, penalty factor $K$, time horizon $T$, time step $\Delta t$, number of time intervals $L$, training batch size $M$, number of training steps $N$, learning rate $\alpha$, temperature parameter $\lambda$ and a good representation of the payoff function $g(\cdot,\cdot)$.\\
\textbf{Learning procedure:}\\
Initialize the neural networks $V_{\theta_l}(\cdot)$ for $l=0,\ldots,L-1$.\\
\For{$n=1,\dots,N$}{
	Generate $M$ state trajectories of $\left\{X_{t_l}\right\}_{l=0}^L$ with $X_0=x_0$ by \eqref{discrete X}.\\
	Calculate the strategy $\left\{\pi\left(t_l,X_{t_l}\right)\right\}_{l=0}^L$ according to $\pi\left(t_l,X_{t_l}\right)=\frac{1}{1+\exp\left(\frac{K}{\lambda}\left(V_{\theta_l}(X_{t_l})-g(t_l,X_{t_l})\right)\right)}$.\\
	Generate $M$ state trajectories of $\left\{R_{t_l}\right\}_{l=0}^L$ with $R_0=r_0$ by \eqref{discrete R}.\\
	
	Calculate the martingale loss \eqref{martingale_loss}.\\
	Update $\left\{\theta_l\right\}_{l=0}^{L-1}$ by taking one step stochastic gradient descent with respect to the martingale loss with learning rate $\alpha$.\\		
}
\end{algorithm}
\begin{algorithm}\label{algorithm_TD0}
\caption{Online TD(0)}
\textbf{Inputs:} initial states $x_0$ and $r_0=1$, penalty factor $K$, time horizon $T$, time step $\Delta t$, number of time intervals $L$, training batch size $M$, number of training steps $N$, learning rate $\alpha$, temperature parameter $\lambda$ and a good representation of the payoff function $g(\cdot,\cdot)$.\\
\textbf{Learning procedure:}\\
Initialize the neural networks $V_{\theta_l}(\cdot)$ for $l=0,\ldots,L-1$.\\
\For{$n=1,\dots,N$}{
	Generate $M$ state trajectories with the identical initial states $X_0=x_0$ and $R_0=r_0$.\\
	\For{$l=0,\dots,L-1$}{
		Generate $X_{t_{l+1}}$ by \eqref{discrete X} for all $M$ state trajectories.\\
		Calculate the strategy $\pi\left(t_l,X_{t_l}\right)$ according to $\pi\left(t_l,X_{t_l}\right)=\frac{1}{1+\exp\left(\frac{K}{\lambda}\left(V_{\theta_l}(X_{t_l})-g(t_l,X_{t_l})\right)\right)}$.\\
		Generate $R_{t_{l+1}}$ by \eqref{discrete R} for all $M$ state trajectories.\\	
		Update $\theta_l$ by stochastic approximation \eqref{update_TD0} with learning rate $\alpha$.\\
	}		
}
\end{algorithm}

\blue{
Finally, we provide some convergence result as a theoretical guarantee of the proposed RL algorithm, assuming that the policy evaluation can be carried out  precisely.\footnote{\blue{Here we ignore the impact of time discretization in the presented convergence result to avoid distraction from the central ideas of the paper. For a  convergence analysis involving the effects of time discretization and sampling in randomization, see Section 3.2.2 in \cite{dai2025datadriven} for Merton's problem with power utility in a Black-Scholes market, where there is only one unknown model primitive. For more general problems with more complex structures, see a recent paper \cite{jia2025accuracy}. This is still an on-going active research topic in the realm of RL for controlled diffusion processes.}}
Starting from a given policy $\pi^0$ and its value function $J^{\pi^0}$, consider a sequence of policies $\left\{\pi^n\right\}_{n=1}^{\infty}$ and the corresponding value functions $\left\{J^{\pi^n}\right\}_{n=1}^{\infty}$, where
\begin{align}\label{Eq: PolicyImproveIteration}
	\pi^{n+1}(t,x) = \frac{1}{1+\exp\left(\lambda^{-1}K\left(J^{\pi^n}(t,x)-g(t,x)\right)\right)}.
\end{align}
It follows from  the Feynman--Kac formula that
\begin{align}\label{Eq: PolicyEvaluateFeynmanKac}
	\partial_t J^{\pi^n}+\opla J^{\pi^n} + H\left(t,x,\pi^n,J^{\pi^n}\right) = 0,\quad J^{\pi^n}\left(T,x\right)=g(T,x),
\end{align}
where
$	H\left(t,x,\pi,J^{\pi}\right) = K\pi(t,x)\left(g(t,x)-J^{\pi}(t,x)\right)-\lambda\BH(\pi(t,x))$
is as defined in the proof of Theorem \ref{policy improvement}.

\begin{theorem}[Convergence rate of policy iteration]\label{convergence of policy improvement}
	Given any initial guess $J^{\pi^0}\in C_b(\mathbb{R})$ for the optimal exploratory control problem \eqref{p3} under an initial admissible policy $\pi^0$, generate the sequence $\left\{\pi^n,J^{\pi^n}\right\}_{n=1}^{\infty}$ alternatingly by \eqref{Eq: PolicyImproveIteration} and \eqref{Eq: PolicyEvaluateFeynmanKac}.	
	Suppose the payoff function $g$ is uniformly bounded. Then, for any fixed penalty factor $K$ and temperature parameter $\lam$, we have
	\begin{align*}
		\left\| v^{\boldsymbol{\Pi}} - J^{\pi^{n+1}} \right\|_{\infty}
		\leq   \frac{(2KT)^n}{n!}  2[(K\left\|g\right\|_{\infty}+\lam\log 2)T+\left\|g\right\|_{\infty}].
	\end{align*}
	In particular, 
	it follows that $J^{\pi^n}$ converges to $v^{\boldsymbol{\Pi}}$ as $n\rightarrow\infty$.
\end{theorem}

}

\section{Simulation studies: American put options}\label{Sec:Simulation}

In this section, we present simulation experiments in which an RL agent learns how to exercise finite-horizon American put options. 
A put option can be formulated as the following optimal stopping problem under the risk-neutral measure
\begin{equation}\label{put}
\max_{\tau\in[0,T]} \BE{ e^{-\rho\tau} \left(\Gamma-X_{\tau}\right)^+},
\end{equation}
where the stock price follows a one-dimensional geometric Brownian motion:
$	\dd X_t=\rho X_t\dt+\sigma X_t \dw_t$
with $\rho$ being the risk-free rate and $\sigma$ being the volatility of the stock price, and the (time-invariant) payoff function is
$	g(x)=\left(\Gamma-x\right)^+$
with $\Gamma$ being the strike price.
In the RL setting, the volatility $\sigma$ is unknown to the agent.

Due to the presence of the exponential time discounting in the objective function, \eqref{ML_component} and \eqref{update_TD0} in Algorithms \ref{algorithm_ML} and \ref{algorithm_TD0} need to be respectively modified to (refer to Section 5.1 in \citealp{jia2022policy}):
\begin{equation*}
G_l=e^{-\rho T}R_{T}g(X_T) - e^{-\rho t_l}R_{t_l} V_{\theta_l}(X_{t_l})+\sum_{j=l}^{L-1} e^{-\rho t_j} R_{t_j}\big[Kg(X_{t_j})\pi_{t_j}- \lam \BH(\pi_{t_j})\big]\Delta t,
\end{equation*}
and
\begin{equation*}
\theta_l \leftarrow \theta_l+\alpha \frac{\partial V_{\theta_l}(X_{t_l})}{\partial \theta_l}\left\{\frac{R_{t_{l+1}}}{R_{t_l}}V_{\theta_{l+1}}(X_{t_{l+1}}) - V_{\theta_l}(X_{t_l})+
\left[Kg(X_{t_l})\pi_{t_l} - \lambda \BH(\pi_{t_l})\right]\Delta t-\rho V_{\theta_l}(X_{t_l})\right\}.
\end{equation*}

In our experiments, we use the same parameters as in \cite{becker2021solving} and \cite{dong2023randomized} for the simulator, i.e., the initial stock price $x_0=40$, strike price $\Gamma=40$, risk-free rate $\rho=0.06$, volatility $\sigma=0.4$ (but unknown to the agent), and maturity time $T=1$.
We use the implicit finite difference method to solve the penalty approximation of the HJB equation of the optimal stopping problem, namely \eqref{hjb2},
and obtain the omniscient option price which equals 5.317, as well as the free boundary $X_{f}(t)$ which is shown as the blue line in Figure \ref{fig: stopping_classifications_ML_TD0_by_VE}.

Finally, Table \ref{table: benchmark values} lists some key (hyper)parameter values in our experiments.

\begin{table}[htbp]
\begin{center}
	\caption{Benchmark values in RL training. }
	\label{table: benchmark values}
	\footnotesize
	\begin{tabular}{l|c|c}		
		\noalign{\hrule height 0.7pt}
		\hline
		& Notation & Benchmark value\\	
		\hline
		Number of time mesh grids &$L$& 50 \\
		\hline
		Penalty factor & $K$ & 10\\
		\hline
		Temperature parameter & $\lambda$ & 1\\
		\hline
		Learning rate & $\alpha$ & 0.01\\
		\hline
		Training batch size & $M$ & $2^{10}$\\
		\hline
		Testing batch size & $M_{\mathrm{test}}$ & $2^{18}$\\
		\hline
		Number of training steps & $N$ & 1000 or 5000\\		
		\noalign{\hrule height 0.7pt}
		\hline
	\end{tabular}
\end{center}
\end{table}


\subsection{Learning early exercise premium}\label{Subsection: learn early exercise premium}


Early exercise premium is the difference in value between an American option and its European counterpart. In our setting, it is defined as 
\begin{equation}\label{v_tilde}
\tilde{v}(t,x):=v(t,x)-V_E(t,x), \vspace{-5pt}
\end{equation}
where 
$V_E(t,x)$ is the value of the European put option with the same strike. The associated payoff function is
\begin{equation}\label{g_tilde}
\tilde{g}(t,x):=g(x)-V_E(t,x).
\end{equation}

Instead of learning the value of the American put option, we set out to learn the value of the early exercise premium. An advantage of doing so is that it removes the singularity of the terminal condition, which improves the learning performance around the maturity time. 

In the following subsections, we will learn the early exercise premium $\tilde{v}$ by solving the related entropy-regularized problem with payoff $\tilde{g}$. However, the unknown model primitive $\sigma$ renders an unknown $V_E$ embedded in $\tilde{g}$; so the problem has an unknown payoff function. We will apply policy evaluation to learn an approximated $\tilde{g}^{\phi}$; refer to
Appendix \ref{learn early exercise premium} for details.


The architecture of the fully connected feedforward NNs used in our study is the same as that in \cite{dong2023randomized}: At each decision time point $t\in\{t_l: l=0,1,\ldots,L-1\}$, we construct one neural network, which consists of one input layer, two hidden layers with 21 neurons followed by an ReLU activation function and one output layer.
Following \cite{becker2021solving}, at time $t_l$ we take the inputs to be the state and the approximated payoff value, i.e., $X_{t_l}$ and $\tilde{g}^{\phi}(t_l, X_{t_l})$. The output $w_{\theta_l}$ of the NN at time $t_l$ is the difference between the early exercise premium and the associated payoff function, i.e., $\tilde{v}(t_l,X_{t_l})-\tilde{g}(t_l,X_{t_l})$, which is equal to $v(t_l,X_{t_l}) - g(t_l,X_{t_l})$ in theory by \eqref{v_tilde} and \eqref{g_tilde}.
Write 
\begin{equation}\label{output_NN}
w_{\theta_l}\left(X_{t_l}, \tilde{g}^{\phi}(t_l, X_{t_l})\right) :=V_{\theta_l}(X_{t_l}) - \tilde{g}^{\phi}(t_l, X_{t_l}),
\end{equation}
where $V_{\theta_l}(\cdot)$ is the NN approximator for the value function of the exploratory problem regarding $\tilde{v}$, namely problem \eqref{v_tilde randomized} at time $t_l$.
Finally, after the input layer and the output layer and before every activation layer, we add a batch normalization layer to facilitate training.

\subsection{Learned value functions}\label{Subsection: learned value functions}
With the  parameter values given in Table \ref{table: benchmark values}, we implement training for 1000 steps by the offline ML Algorithm \ref{algorithm_ML} and for 5000 steps by the online TD(0) Algorithm \ref{algorithm_TD0}.
These are carried out within the PyTorch framework.
Each algorithm spends about 0.5s for one step training on a Windows 11 PC with an Intel Core i7-12700 2.10 GHz CPU and 32GB RAM by CPU computing only.

Figure \ref{fig: value_functions_compare} shows the learning results by Algorithms \ref{algorithm_ML} and \ref{algorithm_TD0} regarding the exploratory value functions at six selected decision time points $t\in\{t_l: l=0, 10, 20, 30, 49\}=\{0, 0.2, 0.4, 0.6, 0.8, 0.98\}$.
In each panel, we plot five lines with the stock price ranging from zero to twice the initial stock price.
The solid red line (with label ``Option'') shows the theoretical (omniscient) values of 
$v(t,x)-g(x)$ derived by finite difference.
The dotted solid blue, orange and green lines (with label ``Dong (2024)'', ``ML'' and ``TD(0)'') are respectively the outputs $v-g$ from the NN by the TD error algorithm in \cite{dong2023randomized} and by the ML and TD(0) algorithms in this paper.
The dotted yellow vertical line (with label ``$S^{*}$'') refers to the specific stock price that separates the stopping and continuation regions at each time, which is also derived by finite difference.

\begin{figure}[htb]
\centering
\subfigure
{\includegraphics[scale=0.33]{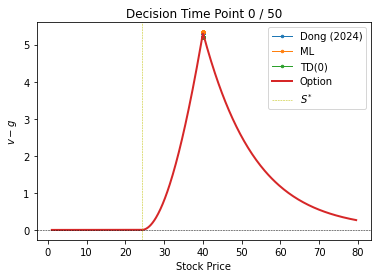}
}
\quad
\subfigure
{\includegraphics[scale=0.33]{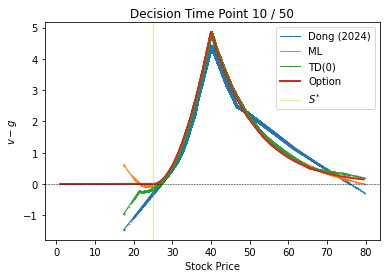}
}
\quad
\subfigure
{\includegraphics[scale=0.33]{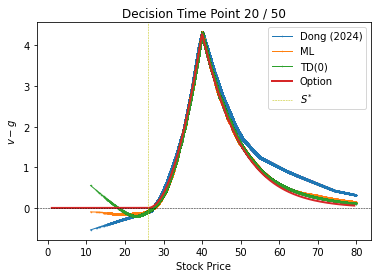}
}
\quad
\subfigure
{\includegraphics[scale=0.33]{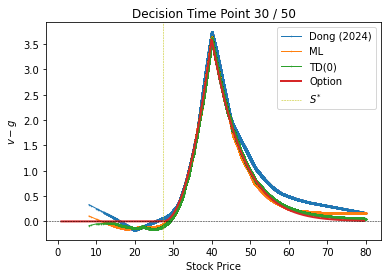}
}
\quad
\subfigure
{\includegraphics[scale=0.33]{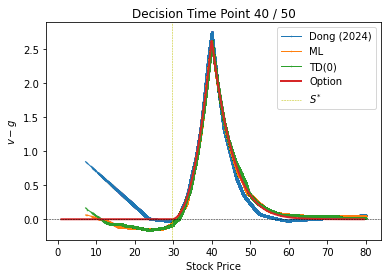}
}
\quad
\subfigure
{\includegraphics[scale=0.33]{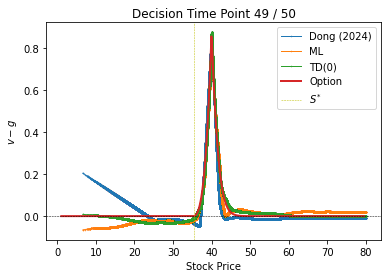}
}
\caption{Learning results of $v-g$ under the TD algorithm in \cite{dong2023randomized}, the offline ML algorithm and the online TD(0) algorithm in this paper at six selected decision time points: $\{0, 10, 20, 30, 40, 49\}$-th discretized time points. The solid red lines are the theoretical values of $v-g$ and the dotted yellow vertical lines specify the stock price on the theoretical free boundary, both derived by finite difference. The training parameters of Dong (2024) are specified in Section \ref{Subsection: compare}.}
\label{fig: value_functions_compare}
\end{figure}

As observed, all the algorithms result in values close to the oracle value, especially when the stock price is above $S^{*}$. The results of \cite{dong2023randomized} have relatively high errors when approaching the terminal time, which is the main motivation for learning early exercise premium instead in this paper. Except for extremely low stock prices, which are rarely reached by our simulated stock price trajectories leading to too few data for adequate learning, the yellow dotted vertical lines well separate the positive values from the non-positive ones.
This result is desired when we construct the execution policy in the next subsection.


\subsection{Execution policy}\label{Subsection: execution policy}

After having learned the exploratory value functions with
well-trained NNs, we can further obtain the stopping probabilities by \eqref{equ update pi}. However, when it comes to {\it real} execution regarding stopping or continuing, it does not seem reasonable to actually toss a biased coin. To wit, randomization is necessary for {\it training}, but not for {\it execution}. For the latter, one way is to use the mean of the randomized control \citep{dai2023learning}, which is, however, inapplicable in the current 0--1 optimal stopping context. For example, a mean of 0.8 is outside of the control value set $\{0, 1\}$. 

Recall that for the classical (non-RL) optimal stopping problem of the American put option, the stopping region is characterized by $\mathcal{S}=\left\{(t,x): v(t,x)=g(x)\right\}$
while the continuation region is characterized by $\mathcal{C}=\left\{(t,x): v(t,x)>g(x)\right\}$.
Thus, it is reasonable to base the sign of $w_{\theta_l}=V_{\theta_l}-\tilde{g}^{\phi}$ in \eqref{output_NN} to make an exercise decision at time $t_l$.
Theoretically, by \eqref{equ update pi}, when the penalty factor $K$ goes to infinity, $\pi^{*}$ tends to 1 and 0 when $w_{\theta_l}<0$ and $w_{\theta_l}>0$, respectively.
However, $K$ is finite in our implementation, $\pi^{*}$ in \eqref{equ update pi} is more likely to be neither 0 nor 1; but 0.5 is an appropriate threshold to divide between $w_{\theta_l}<0$ and $w_{\theta_l}>0$.
In Figure \ref{fig: stopping_classifications_ML_TD0_by_VE}, we randomly choose 100 stock price trajectories from the testing dataset, and classify the learned stopping strategies into two groups based on either $\pi^*\geq0.5$ or $\pi^*<0.5$. When the former occurs the agent chooses to exercise the option, which is represented by the green triangle; 
otherwise the agent does not exercise, represented by the red circle.
Besides, the blue line corresponds to the free boundary of this American put option $X_f(t)$.
As Figure \ref{fig: stopping_classifications_ML_TD0_by_VE} shows, after sufficient training, both algorithms effectively classify almost all the exercise/non-exercise time--price pairs.

Next, we calculate the accuracy rate of classification at each time point as 
$\qquad$ 
$\frac{\#\left\{\text{correctly classified sample points}\right\}}{M_{\mathrm{test}}}.$
It turns out
that both algorithms achieve accuracy rates of higher than $95\%$ for all discretized time points.\footnote{One never exercises an American put option when the stock price is greater than the strike price; so actually we only need to study the points below the black dotted lines in Figure \ref{fig: stopping_classifications_ML_TD0_by_VE}. With this in mind the accuracy rates achieved are even higher.}
\begin{figure}[htb]
\centering
\subfigure[Random initialization]
{
	\includegraphics[scale=0.35]{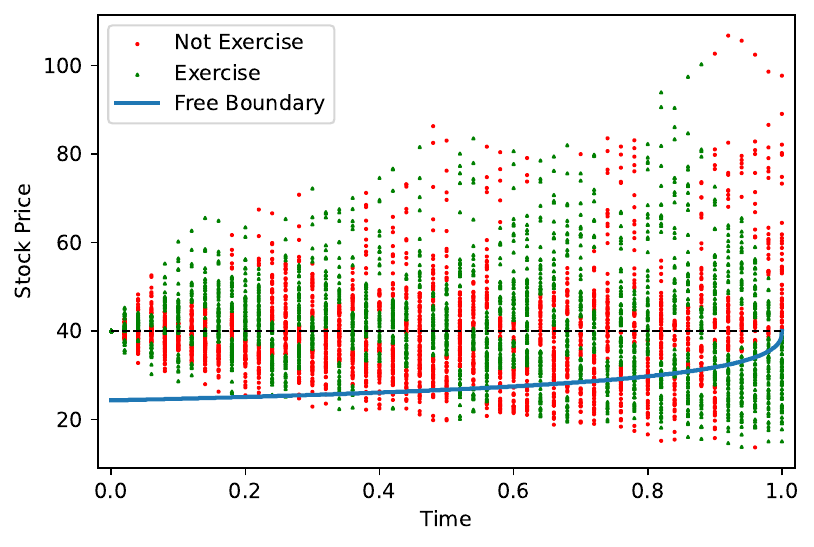}		
}
\quad
\subfigure[By ML with 1000 training steps]
{
	\includegraphics[scale=0.35]{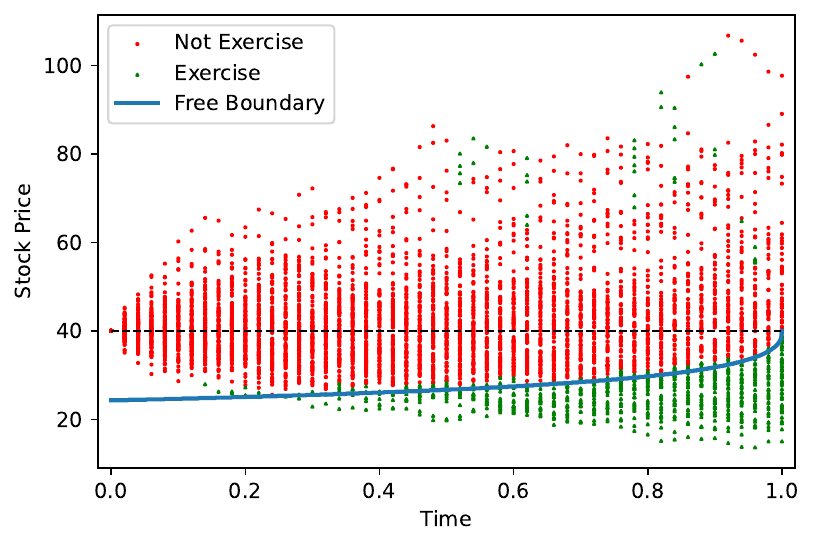}
}
\quad
\subfigure[By TD(0) with 5000 training steps]
{
	\includegraphics[scale=0.35]{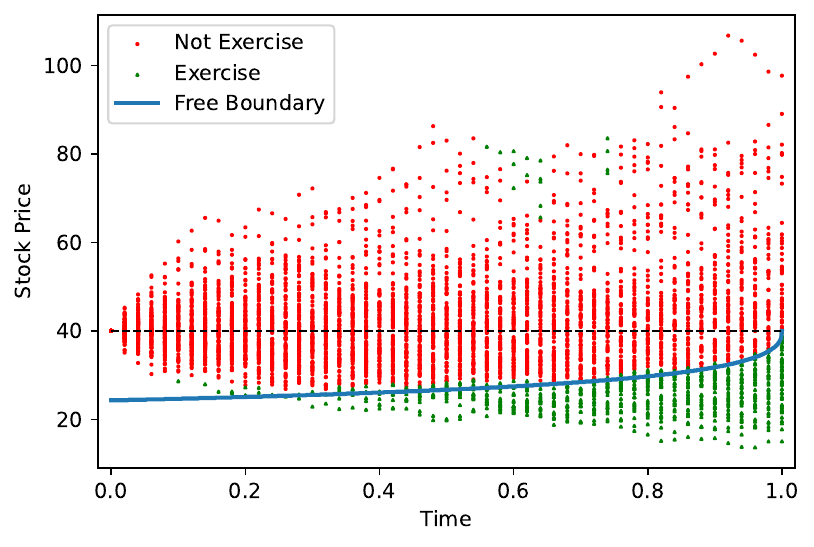}
}
\caption{Stopping strategies learned from the two algorithms. The blue line is the theoretical free boundary of the American put option derived by finite difference. In each panel, there are 100 stock price trajectories with 50 discretized time points. The red circles correspond to the cases for which the stopping probability is less than half, while the green triangles to the cases for which the stopping probability is larger than or equal to half.} 
\label{fig: stopping_classifications_ML_TD0_by_VE}
\end{figure}



\subsection{Option price}\label{Subsection: option price}

With the execution policy determined in Section \ref{Subsection: execution policy}, how to compute the option price?
In this subsection, we introduce two ways to do it based on the learned exploratory value functions in Section \ref{Subsection: learned value functions}.

The first way is to calculate the value of the original optimal stopping problem with the stopping time determined by the learned exploratory value functions in Section \ref{Subsection: learned value functions}, namely
\begin{equation}\label{option_price_by_stopping}
\mathcal{P}_{\mathrm{stopping}} :=\mathbb{E}\left[e^{-\rho \tau}g(X_{\tau})\right],
\end{equation}
where 
\begin{equation}
\tau :=\inf \left\{t_l: V_{\theta_l}(X_{t_l})\leq \tilde{g}^{\phi}(t_l, X_{t_l}),\ l\in\left\{0,1,...,L\right\} \right\} \wedge T
\end{equation}
with the convention that the infimum of an empty set is infinity.

The second way is to calculate the value \eqref{value_func_control} at the initial state $s=0, x=x_0$ under the execution policy determined in Section \ref{Subsection: execution policy}, namely
\begin{equation}\label{option_price_by_control}
\mathcal{P}_{\mathrm{control}}:=\BE{\int_s^{T} Ke^{-K\int_{s}^{t}u_{r}\dr}g(t,X_t)u_{t}\dt
+e^{-K\int_{s}^{T}u_{r}\dr}g(T,X_T)\;\bigg|\; X_s=x},
\end{equation}
where 
\begin{equation}
u(t,x)=1 \mbox{ if } V_{\theta_l}(x)\leq \tilde{g}^{\phi}(t, x), \mbox{ and 0 otherwise, for $(t,x)\in [0,T)\times \mathbb{R}$.}
\end{equation}
Recalling that the omniscient option price is 5.317, we define the relative error of a learned price $\mathcal{P}$  to be $\frac{|\mathcal{P}-5.317|}{5.317}$.  
We calculate the relative error on the testing dataset every 10 training steps.
The learning curves of option price from the two methods described above are plotted in Figure \ref{fig: learn_curve_ML_TD0_by_VE}.

\begin{figure}[htb]
\centering
\subfigure[$\mathcal{P}_{\mathrm{stopping}}$]
{\includegraphics[scale=0.45]{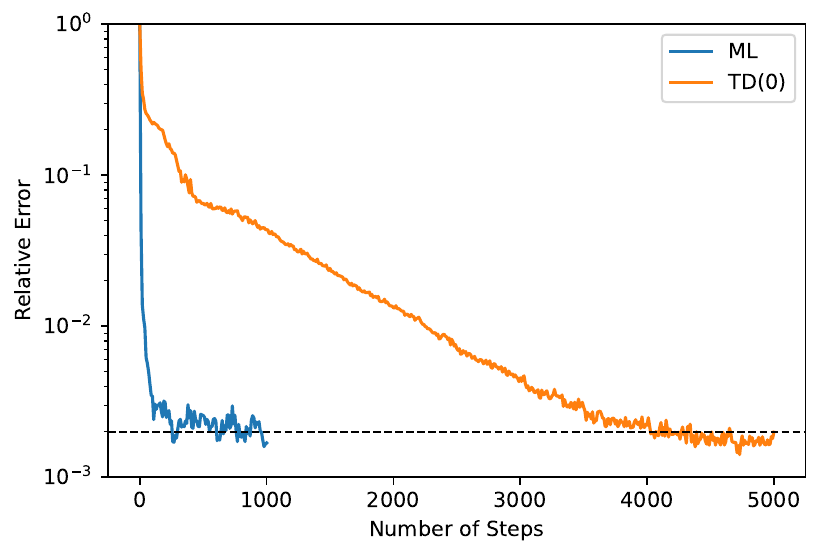}	
}
\quad
\subfigure[$\mathcal{P}_{\mathrm{control}}$]
{\includegraphics[scale=0.45]{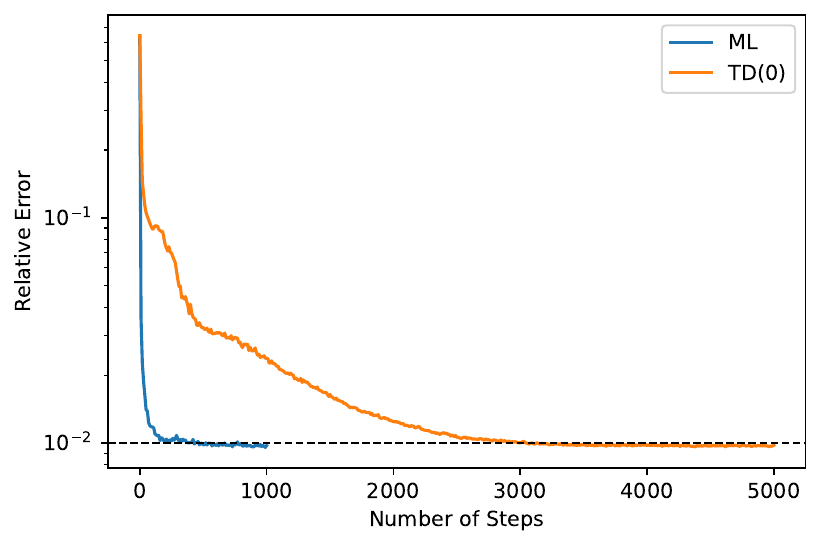}
}
\caption{Learning curves of option price $\mathcal{P}_{\mathrm{stopping}}$ defined in \eqref{option_price_by_stopping} and $\mathcal{P}_{\mathrm{control}}$ defined in \eqref{option_price_by_control} by offline ML (blue line) and online TD(0) (orange line) algorithms.
We train the ML algorithm for 1000 steps and TD(0) algorithm for 5000 steps. Values of model parameters used for training are specified in Table \ref{table: benchmark values}.}
\label{fig: learn_curve_ML_TD0_by_VE}
\end{figure}


First, we find that the ML algorithm has faster convergence than the online TD(0). This is because the latter needs to update parameters at every small time steps. 
Second, both algorithms eventually achieve very low relative errors (about $0.2\%$ and $1\%$ on average for $\mathcal{P}_{\mathrm{stopping}}$ and $\mathcal{P}_{\mathrm{control}}$, respectively). 
Thus, the RL algorithms prove to be effective in terms of learning the option price without knowing the true value of the volatility $\sigma$.

We observe that
the relative errors in the second method converge to the level about $1\%$. This is due to the finite value of the penalty factor $K=10$ chosen in Table \ref{table: benchmark values}. In the next subsection, we will show that as $K$ increases, the relative error can be further reduced.


\subsection{Sensitivity analysis}
In this subsection, we carry out a sensitivity analysis on the impacts of the values of the penalty factor $K$ and the temperature parameter $\lambda$ on learning.

\subsubsection{Penalty factor}\label{Subsec:penalty_factor}

We examine the effect of penalty factor on the learned option price in this subsection. 
First,
{from a numerical implementation perspective,} it follows from \eqref{discrete R} that $0<K\leq\frac{1}{\Delta t}=50$ in our setting, {since the process $R_t$ cannot become negative}.
We pick $K\in\left\{1, 5, 10, 20, 30, 40, 50\right\}$ and keep the other model parameter values unchanged as in Table \ref{table: benchmark values}. Figure \ref{fig: learn_curve_ML_TD0_by_VE_as_K_varies} shows the learning curves of option prices with a combination of the two algorithms and the two alternative ways.
As expected, the higher the value of $K$, the less the relative error achieved by both algorithms. However, when $K$ increases to a certain level, a further increase no longer helps decrease the error, in which case the sampling error dominates.

\begin{figure}[htb]
\centering
\subfigure[$\mathcal{P}_{\mathrm{stopping}}$ by ML]
{\includegraphics[scale=0.38]{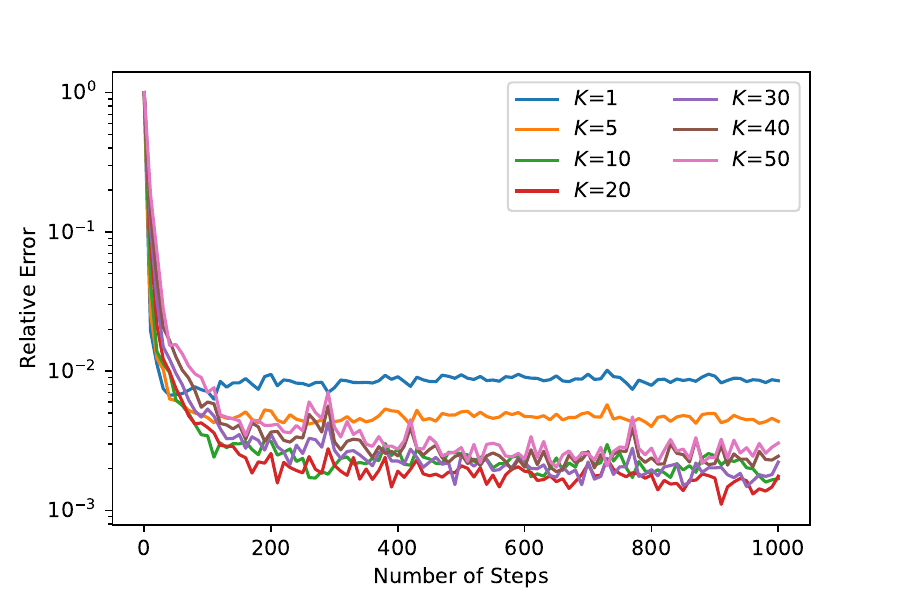}
}
\subfigure[$\mathcal{P}_{\mathrm{stopping}}$ by TD(0)]
{\includegraphics[scale=0.38]{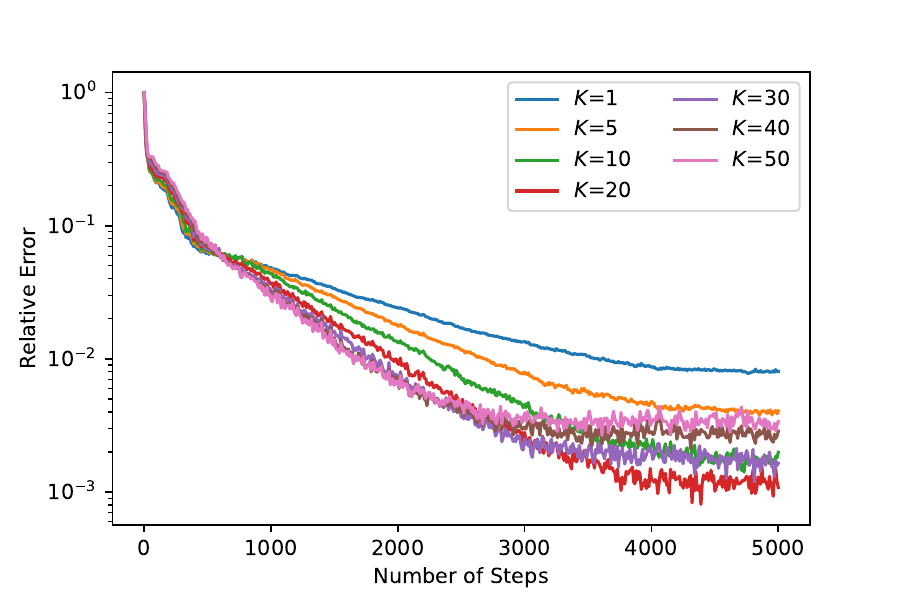}
}
\subfigure[$\mathcal{P}_{\mathrm{control}}$ by ML]
{\includegraphics[scale=0.38]{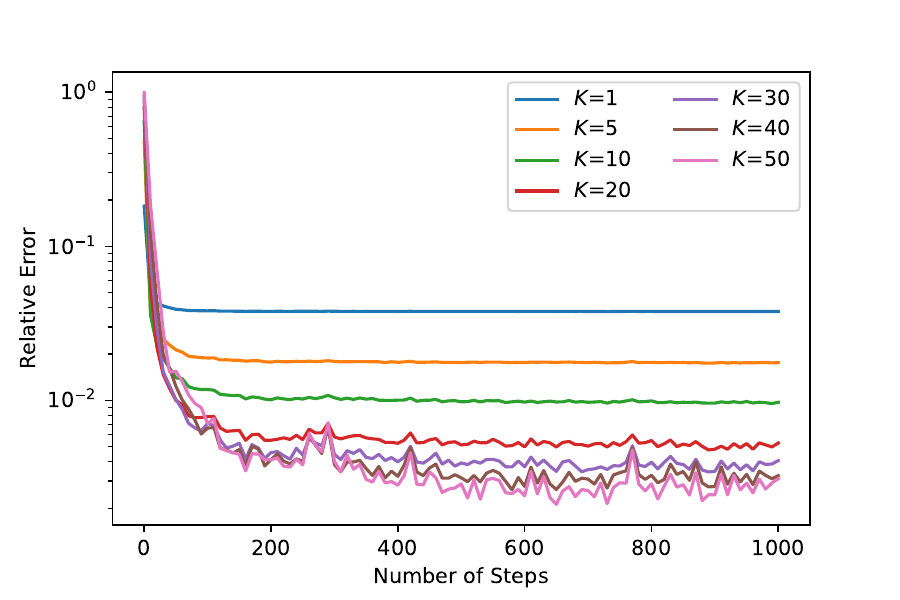}
}
\subfigure[$\mathcal{P}_{\mathrm{control}}$ by TD(0)]
{\includegraphics[scale=0.38]{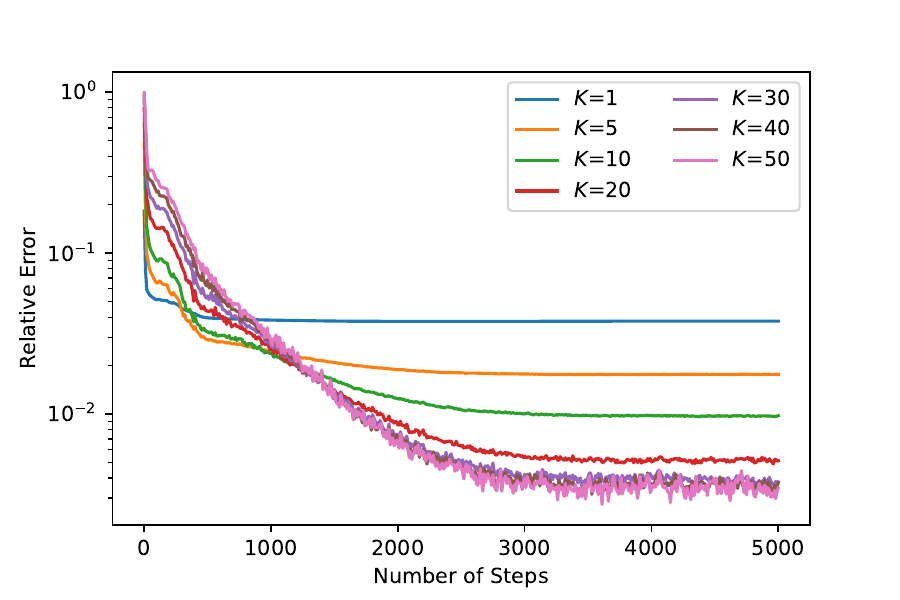}
}
\caption{Learning curves of option prices $\mathcal{P}_{\mathrm{stopping}}$ defined in \eqref{option_price_by_stopping} and $\mathcal{P}_{\mathrm{control}}$ defined in \eqref{option_price_by_control} by offline ML and online TD(0) algorithms. In each panel, the penalty factor $K$ takes value from the set $\{1, 5, 10, 20, 30, 40, 50\}$ while other parameter values are set in Table \ref{table: benchmark values}. We train the ML algorithm for 1000 steps and TD(0) algorithm for 5000 steps. In all cases, the relative errors decrease in $K$.}
\label{fig: learn_curve_ML_TD0_by_VE_as_K_varies}
\end{figure}

\subsubsection{Temperature parameter}

The temperature parameter controls the weight on exploration, and thus is important for learning. 
Figure \ref{fig: learn_curve_ML_TD0_by_VE_as_lam_varies_fix_K_10}
shows the learning curves of option prices when $\lambda\in\left\{0.1, 0.5, 1, 3, 5, 10\right\}$, while keeping the other model parameter values as in Table \ref{table: benchmark values}.\footnote{In
Appendix \ref{Appendix: K=50}, we also present the results for a higher penalty factor $K=50$.}
We observe that in general, the smaller the temperature parameter, the more accurate but the lower the learning speed for convergence, which is especially evident with the TD(0) algorithm.\footnote{Similar results are also noted in \cite{dong2023randomized}.}
An implication of this observation is that if the computing resource is limited so the number of training steps is small, we can increase $\lambda$ to accelerate convergence.
Intriguingly,
in Figure \ref{fig: learn_curve_ML_TD0_by_VE_as_lam_varies_fix_K_10}, the errors are not monotonic in $\lambda$ when $\lambda\leq 1$ in the two cases using $\mathcal{P}_{\mathrm{stopping}}$. We conjecture that it is because the effect of $\lambda$ is dominated by the sampling error when $\lambda$ is sufficiently small. However, overall, the differences are around $0.3\%$ for $\lambda\in\{0.1, 0.5, 1\}$, which are sufficiently small.

\begin{figure}[htb]
\centering
\subfigure[$\mathcal{P}_{\mathrm{stopping}}$ by ML]
{\includegraphics[scale=0.38]{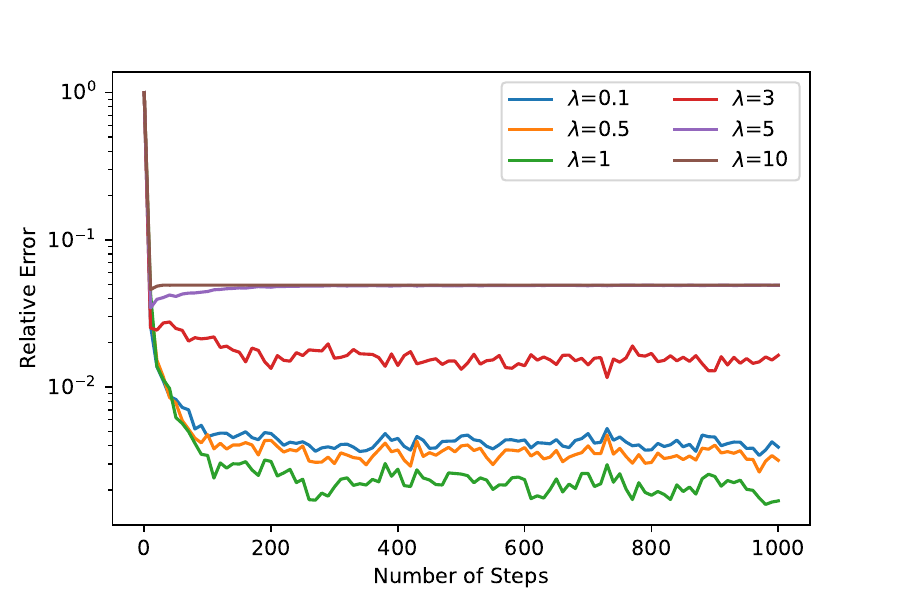}
}
\subfigure[$\mathcal{P}_{\mathrm{stopping}}$ by TD(0)]
{\includegraphics[scale=0.38]{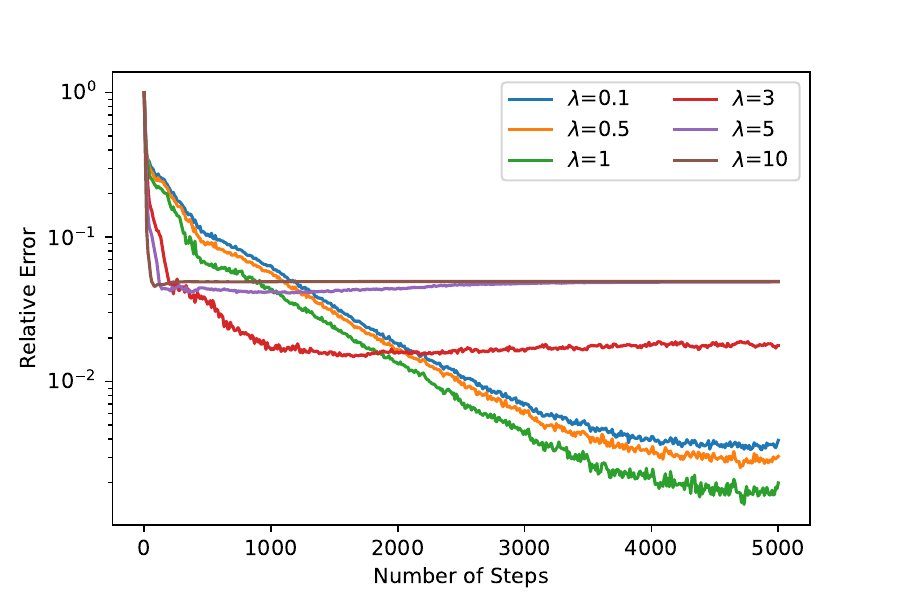}
}
\subfigure[$\mathcal{P}_{\mathrm{control}}$ by ML]
{\includegraphics[scale=0.38]{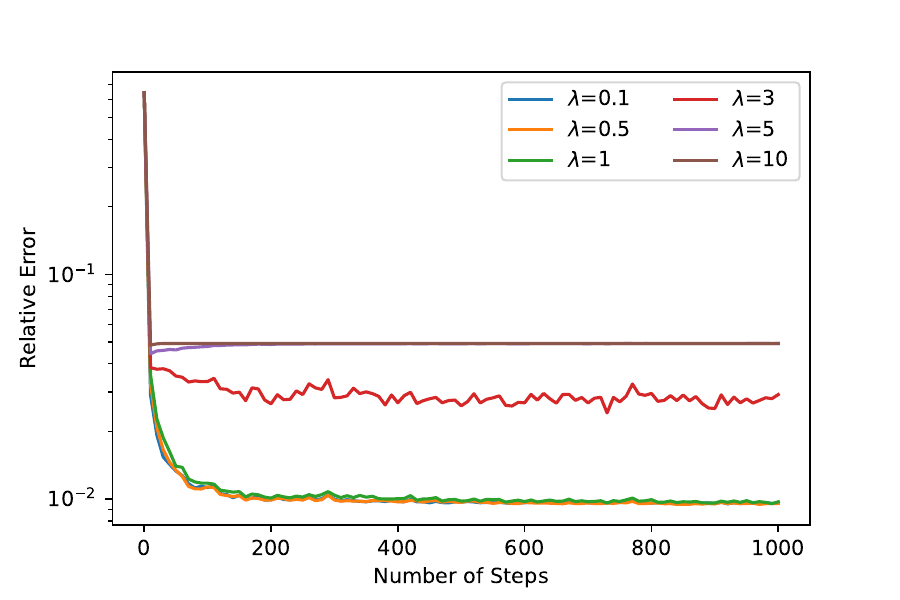}
}
\subfigure[$\mathcal{P}_{\mathrm{control}}$ by TD(0)]
{\includegraphics[scale=0.38]{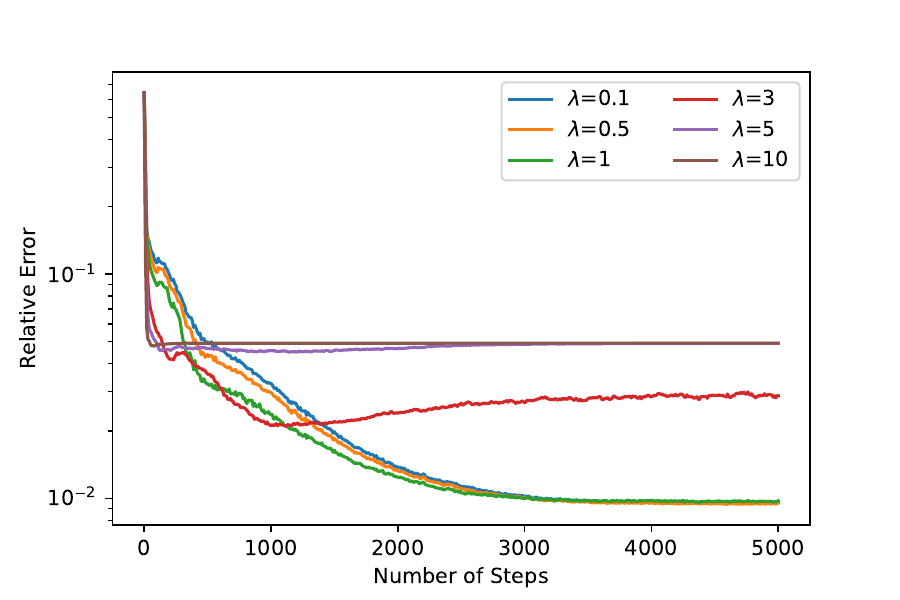}
}
\caption{Learning curves of $\mathcal{P}_{\mathrm{stopping}}$ defined in \eqref{option_price_by_stopping} and $\mathcal{P}_{\mathrm{control}}$ defined in \eqref{option_price_by_control} by offline ML and online TD(0) algorithms. In each panel, the penalty factor $K$ is fixed to 10 and the temperature parameter $\lambda$ takes value from the set $\{0.1, 0.5, 1, 3, 5, 10\}$ while other parameter values are chosen by Table \ref{table: benchmark values}. We train the ML algorithm for 1000 steps and TD(0) algorithm for 5000 steps.}
\label{fig: learn_curve_ML_TD0_by_VE_as_lam_varies_fix_K_10}
\end{figure}


\subsection{Comparison with Dong (2024)}\label{Subsection: compare}

Both \cite{dong2023randomized} and this paper study RL for optimal stopping, but there are major differences as discussed in the introduction section.
In particular, there is a considerable technical difference in terms of the entropy terms introduced in the respective RL objectives. It turns out that, when formulated in our setting with the penalty factor $K$, the decision variable $\tilde{\pi}$ in \cite{dong2023randomized} is the product of $K$ and the probability of stopping in our paper, i.e., $\tilde{\pi}=K\pi$.
The entropy used in \cite{dong2023randomized} is
$R(\tilde{\pi}) :=\tilde{\pi} - \tilde{\pi} \log\tilde{\pi}=R(K\pi)=K\pi - K\pi\log(K\pi),$
whereas we take the differential entropy
$-\BH(\pi)=-\pi\log \pi - (1-\pi)\log(1-\pi).$
\cite{dong2023randomized} explains that $R(\tilde{\pi})$ is proposed by assuming $\tilde{\pi}\Delta t$ to be the probability of stopping in a $\Delta t$-length time interval and then integrating the differential entropy
$-\BH(\tilde{\pi}\Delta t)=-\tilde{\pi}\Delta t\log\Delta t+\left(\tilde{\pi}-\tilde{\pi}\log\tilde{\pi}\right)\Delta t+o(\Delta t)$
over the whole time horizon while dropping the first divergence term.
In other words, it is not the exact differential entropy of the randomized strategy as in our paper.

Next, we compare the performances of the algorithms between the two papers.
We use $\mathcal{P}_{\mathrm{stopping}}$ defined in \eqref{option_price_by_stopping} for computing option price, as it is the one taken by \cite{dong2023randomized}.
For a fair comparison, we use the TD error employed in \cite{dong2023randomized} as the criterion for performance and we learn the option price instead of the early exercise premium.
We implement training for 1000 steps for both algorithms and repeat the training for 10 times.
Because the scales of the entropy terms used in the two papers are considerably different, we train the two algorithms for different combinations of temperature parameter $\lam\in\left\{10, 1, 0.1, 0.01, 0.001, 0.0001\right\}$ and learning rate $\alpha\in\left\{0.1, 0.01, 0.001, 0.0001\right\}$. The other parameters are kept the same as in Table \ref{table: benchmark values} and the simulated stock price samples are the same for both algorithms.
We then check the learning speeds of the two algorithms. Since the theoretical option price is 5.317, we examine the learning speeds for all the cases of $(\lambda,\alpha)$ in which the learned option prices are at least 5.29 for a meaningful comparison. We find that our algorithm is always faster than \cite{dong2023randomized}'s.
Figure \ref{fig:compare_learn_curve} is an illustration with $(\lambda, \alpha)=(0.001, 0.01)$ (these are the model parameters corresponding to the solid blue line in Figure \ref{fig: value_functions_compare}) in Dong's algorithm and $(\lambda, \alpha)=(1,0.1)$ in our algorithm.
The figure shows that both algorithms achieve very low relative errors ($0.39\%$ for our method and $0.44\%$ for Dong's method) after 1000 training steps, but our algorithm is around 200 steps faster in convergence.

\begin{figure}[htb]
\centering
\includegraphics[scale=0.45]{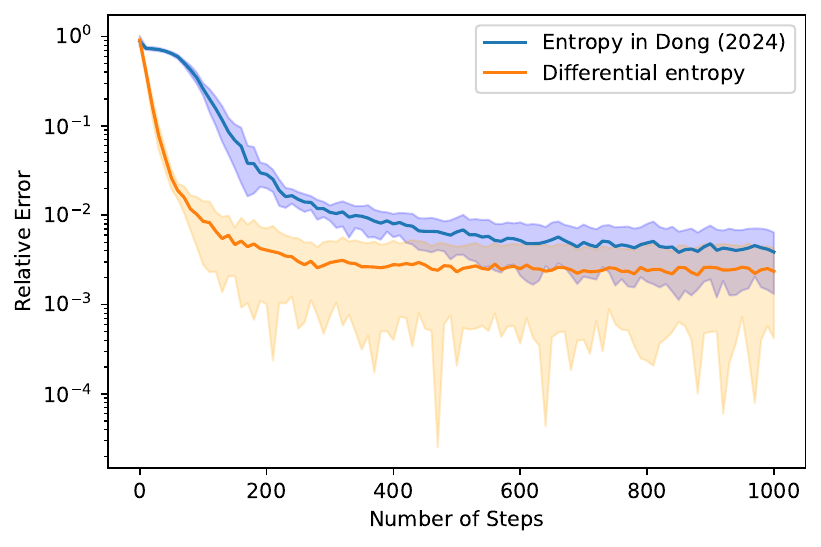}
\caption{Learning curves of option price from our algorithm and \cite{dong2023randomized}'s.
We repeat the training for 10 times to calculate the standard deviations of the learned option prices, which are represented as the shaded areas. The width of each shaded area is twice the corresponding standard deviation.
Note that the vertical axis uses a logarithmic scale.}
\label{fig:compare_learn_curve}
\end{figure}

%

The simulation study reported so far is for an American option, whose price is evaluated under the {\it risk-neutral} probability. However, the underlying stock price is in general {\it not} observable in the risk-neural world; hence pricing American options is not the best example for illustrating a data-driven approach such as RL.
Here, we present another application to demonstrate the effectiveness of our method in which the physical probability measure instead of the risk-neutral one is used (and thus all the relevant data are observable).
Specifically,
\cite{dai2024learningcost} consider the problem of learning an optimal investment policy to maximize a portfolio's terminal log return in the presence of proportional transaction costs, and turn it into an equivalent Dynkin game which involves optimal stopping decisions of two players. The latter is naturally formulated under the physical
measure, and suitable for RL study under the frameworks of both \cite{dong2023randomized} and this paper; refer to
Appendix \ref{RL_Dynkin_penalty}  for details. \cite{dai2024learningcost} extend the RL algorithm of \cite{dong2023randomized} to learn an optimal investment policiy with transaction costs. With the same model parameters as in \cite{dai2024learningcost} and a penalty factor $K=5$ --- which is an appropriate choice as specified in Section \ref{Subsec:penalty_factor} when the time horizon $\left[0,T\right]$ with $T=1$ is equally divided into 10 time intervals (so that $\Delta t=1/10$, and there are two stopping times in the Dynkin game) --- we learn how to trade in the Black--Scholes market under the penalty formulation. We then compare the learning results for the free boundaries and the average log return when applying the learned strategies over a 20-year horizon.
Similar to the case of the American put option, we choose temperature parameter $\lambda\in\left\{10^{-2}, 10^{-4}, 10^{-6}\right\}$ and learning rate $\alpha\in\left\{0.1, 0.05, 0.01, 0.005\right\}$, and keep the other training parameters and structures of neural networks the same as in \cite{dai2024learningcost}, with 15000 training steps for both algorithms. As noted, in this example the stock price samples are simulated under the physical measure. Our experiments show that
among the 12 combinations of $\lambda$ and $\alpha$, \cite{dai2024learningcost}'s algorithm achieves higher average log return in the last training step than our algorithm for 2 times, while we achieve better results for 10 times.
For comparing the learning speeds, note that the omniscient level of average log return in the 20-year horizon under the testing dataset is around 3.00; so we choose the cases of $(\lambda,\alpha)$ in which the two algorithms achieve average log return values of greater than 2.95.
We find that $(\lambda,\alpha)=(10^{-6},0.05)$ and $(\lambda,\alpha)=(10^{-2},0.01)$ result in the fastest convergence of average log return for \cite{dai2024learningcost}'s algorithm and ours, respectively.
Figure \ref{fig:compare_costs} plots the corresponding learning curves of the initial free boundaries and average log returns.
Figure \ref{fig:compare_free_boundary} shows that the convergence of the sell boundary is almost identical for the two algorithms, while the convergence of the buy boundary is faster under our algorithm.
Figure \ref{fig:compare_avg_log_return} indicates that the convergence of average log return is around 5000 steps faster under our algorithm.
It is interesting to note that the trivial buy-and-hold strategy leads to an average log return value of 2.5; so both the RL algorithms indeed help the agent ``beat the market".


\begin{figure}[htb]
\centering
\subfigure[Free boundaries at the initial time]
{
\includegraphics[scale=0.45]{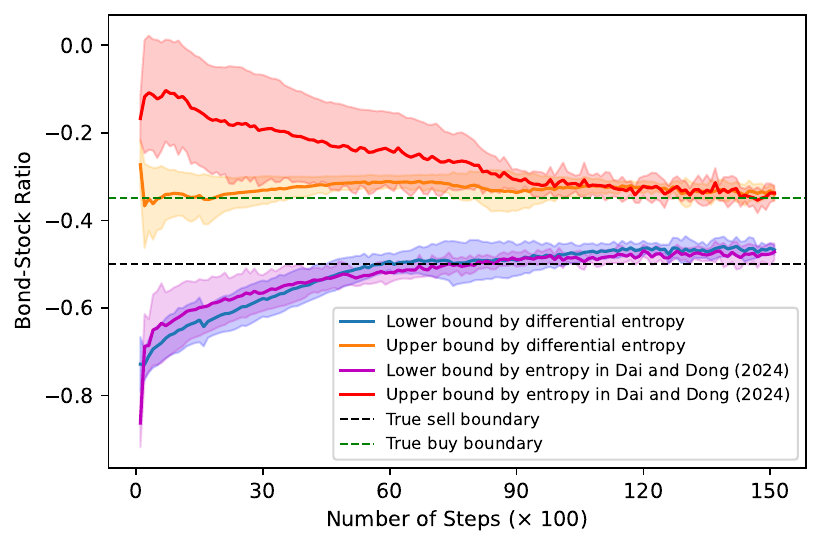}\label{fig:compare_free_boundary}
}
\subfigure[Log return]
{
\includegraphics[scale=0.45]{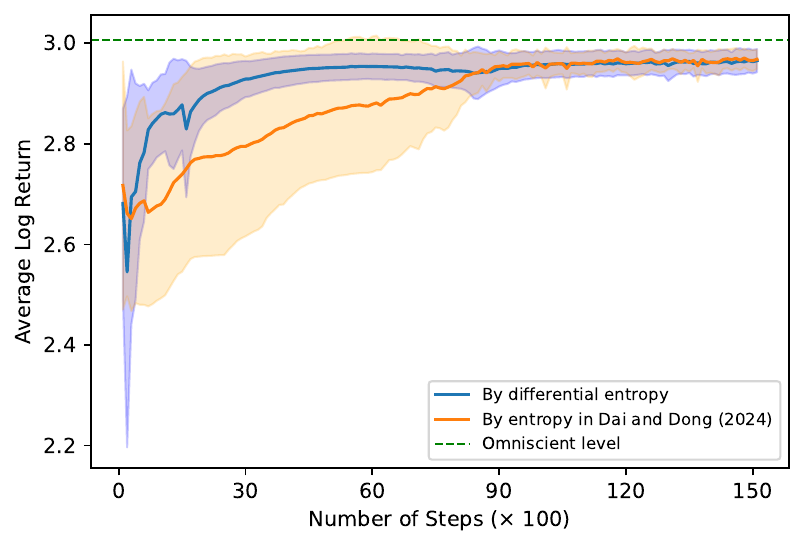}\label{fig:compare_avg_log_return}
}
\caption{Learning curves of free boundaries and average log return from our algorithm and \cite{dai2024learningcost}'s.
We repeat the training for 10 times to calculate the standard deviations of the learned free boundaries and average log return, which are represented as the shaded areas. The width of each shaded area is twice the corresponding standard deviation.}
\label{fig:compare_costs}
\end{figure}

\blue{One reason for the aforementioned difference in performance may be that compared to the differential entropy in our algorithm, the scale of the entropy term in \cite{dong2023randomized} is larger, causing greater biases in learning the optimal stopping value.
Specifically, the unnormalized negentropy $R(\tilde{\pi})\rightarrow -\infty$ as $\tilde{\pi}\rightarrow \infty$, whereas the differential entropy $-\BH(\pi)\in[0,\log2]$. The unboundedness of $R(\tilde{\pi})$ results in a convergence rate of $\frac{(CTe^{M/\lambda})^n}{n!}$ for policy iteration (see Theorem 3.4 in \cite{dong2023randomized}), where the proof implies that $C$ is a constant typically set to $2$,	
and $M$ depends on the temperature parameter $\lambda$. 	By contrast, Theorem \ref{convergence of policy improvement} shows a convergence rate of $\frac{(2KT)^n}{n!}$ for our algorithm.
To achieve sufficient accuracy, the penalty factor $K$ in our algorithm is typically chosen as $50$, while the temperature parameter in Dong’s algorithm must be sufficiently small, e.g., $\lambda\leq10^{-3}$.
The dependence of $M$ on $\lambda$ in \cite{dong2023randomized} is complex, as the unnormalized negentropy $R(\tilde{\pi})$ can take both positive and negative values. Nevertheless, based on the early-stage learning results shown in Figures \ref{fig:compare_learn_curve} and \ref{fig:compare_avg_log_return}, as well as the theoretical values---approximately 5.32 and 3.00, respectively, for the two problems---we have $M>1$.
Consequently, for small $n$, $\frac{(CTe^{M/\lambda})^n}{n!} \gg \frac{(2KT)^n}{n!}$, which partially explains the faster learning exhibited by our algorithm in the early training steps. As $n$ increases, the factorial in the denominator dominates, and both algorithms eventually converge.

In addition, in \cite{dong2023randomized}'s algorithm, the use of the unnormalized negentropy yields an optimal intensity $\tilde{\pi}=\exp\left(-\frac{V_{\theta}-g}{\lambda}\right)$, which takes values in $[0,\infty)$.
However, for a given time step size $\Delta t$, $\tilde{\pi}$ must be truncated at $\frac{1}{\Delta t}$ if $\exp\left(-\frac{V_{\theta}-g}{\lambda}\right)>\frac{1}{\Delta t}$, since $\tilde{\pi}\Delta t$ is interpreted as the probability of stopping in his formulation. This situation arises when the $\theta$-parameterized neural network generates a negative $V_{\theta}-g$ and the temperature parameter is small, such as  $\lambda\leq10^{-3}$ in the numerical examples. 	
As a result, the loss function minimized by stochastic gradient descent is not the exact TD error in his formulation (see equation (4.4) in \cite{dong2023randomized}), introducing a bias into the learning process. By contrast, our algorithm employs differential entropy resulting in an optimal probability $\pi\in[0,1]$, and the loss function minimized by stochastic gradient descent corresponds exactly to the TD error in our formulation. This may be another reason why our algorithm exhibits faster convergence.
}

\blue{
\section{Simulation studies: High-dimensional examples}\label{Sec:Simulation in high-dim}

Other than the model-free approach that mitigates the model estimation errors, another significant advantage of RL is its ability to scale to high-dimensional problems. We demonstrate this with two examples in this section.

\subsection{American geometric average put-type option}\label{SubSec:American geometric average put}

The following example is adapted from Section 4.3.2.1 in \cite{becker2021solving} and concerns the pricing of an American geometric average {put-type} option on up to 200 distinguishable stocks in a Black–Scholes market.

As in \cite{becker2021solving}, we set the dimension (the number of stocks)
$d\in\{40,80,120, \allowbreak 160,200\}$,
the initial stock price {$x_0=100^{1/\sqrt{d}}$ for all the stocks}, the time horizon $T=1$, the strike price $\Gamma=95$, the risk-free rate $\rho=0.6$, the volatility 
$\sigma_i=\min \{ 0.04 [(i - 1) \bmod 40], \, 1.6 - 0.04 [(i - 1) \bmod 40]\}$
and  dividend yield $\delta_i=\rho-\frac{\sum_{i=1}^d\sigma_i^2}{d^2}(i-\frac{1}{2})-\frac{1}{5\sqrt{d}}$ for each  $i\in\left\{1,2,\ldots,d\right\}$.
{Denote by $X_t^{(k)}$ the price of the $k$-th stock at time $t$.}
The optimal stopping problem is to
\begin{equation*}
\max_{\tau \in [0, T]} \, \mathbb{E} \left[ e^{-\rho \tau} \left(\Gamma - \left[ \prod_{k=1}^d |X_\tau^{(k)}|^{1/\sqrt{d}} \right]\right)^+ \right].
\end{equation*}

In our simulation studies, we set the penalty factor $K$ to be 100 (which corresponds to the time mesh grid $\Delta t=1/100$ {as discussed in Section \ref{Subsec:penalty_factor}}), the temperature parameter $\lambda=1$, the training batch size to be $2^{10}$, and the testing batch size to be $2^{17}$. The initial learning rate is set to be 0.05 and is decreased by a factor 1/10 every 300 training steps.
We train 1000 steps under the offline ML Algorithm \ref{algorithm_ML}.
In Table \ref{Table: American geometric average put-type option}, we show the learning results for different numbers $d$ of underlying stocks.
We test the learned price every 10 training steps; so we have 100 observations in total for each $d$. The final observation is listed in the second column of Table \ref{Table: American geometric average put-type option}.

It is worth noting that pricing this high-dimensional American geometric average put-type option is equivalent to pricing a one-dimensional American put option that is independent of the dimension.
\footnote{\blue{Specifically, for the equivalent one-dimensional American put-type option problem, the stock price process
in a Black--Scholes market setting
has an initial price of 100, a risk-free rate of $\rho$, a dividend yield of $\rho-\frac{\sum_{i=1}^d\sigma_i^2}{2d}-\frac{1}{5}$, and a volatility of $\frac{1}{\sqrt{d}}\sqrt{\sum_{i=1}^d\sigma_i^2}$ under the particular choices of $\sigma_i$ and $\delta_i$ in the example.  Notably, $\frac{\sum_{i=1}^d\sigma_i^2}{d}$ remains constant when the dimension is a multiple of 40; see Section 4.3.2.1 and Proposition 4.3 in \cite{becker2021solving}.}}
Hence we can easily obtain the corresponding omniscient values: The  theoretical price of its European counterpart is 3.565, and the benchmark value for the price of this one-dimensional American put option calculated using the binomial tree method on Smirnov’s website (\url{https://www.math.columbia.edu/~smirnov/options13.html}) with 20000 nodes is 6.545.
It turns out that our RL algorithm achieves high early exercise premium and is effective for pricing the high-dimensional option with comparable high accuracy compared to the benchmarks in \cite{becker2021solving}.
Moreover, our algorithm is scalable, as Table \ref{Table: American geometric average put-type option} shows a sublinear growth in runtime with respect to the dimension.


\begin{table}
\centering
\small
\begin{tabular}{cccc}
\toprule
Dimension $d$ & \parbox[t]{3.5cm}{\centering Last learned price \\by our method }  & In \cite{becker2021solving} & \parbox[t]{2cm}{\centering Runtime in sec.} \\
\midrule
40  & 6.512  & 6.512  & 1838 \\
80  & 6.523  & 6.509  & 2956 \\
120 & 6.504  & 6.507  & 4094 \\
160 & 6.507  & 6.504  & 6042 \\
200 & 6.493  & 6.501  & 7394 \\
\bottomrule
\end{tabular}
\caption{Comparison of learned prices and runtime for different values of dimension $d$. The initial stock price $x_0=100^{1/\sqrt{d}}$, the time horizon $T=1$, the strike price $\Gamma=95$, the risk-free rate $\rho=0.6$, the volatility $\sigma_i=\min \{ 0.04 [(i - 1) \bmod 40], \, 1.6 - 0.04 [(i - 1) \bmod 40]$ and dividend yield $\delta_i=\rho-\frac{\sum_{i=1}^d\sigma_i^2}{d^2}(i-\frac{1}{2})-\frac{1}{5\sqrt{d}}$ for each $i\in\left\{1,2,\ldots,d\right\}$. The payoff function when exercising the option is $g(x)=\left(\Gamma - \left[ \prod_{k=1}^d |x^{(k)}|^{1/\sqrt{d}} \right]\right)^+$.
The benchmark value for the price of this option is 6.545.}
\label{Table: American geometric average put-type option}
\end{table}

In the above experiment, the risk-free rate $\rho=0.6$ taken from \cite{becker2021solving} is unrealistically large. We now select a more reasonable value and adjust other parameters so that the equivalent one-dimensional American put option remains independent of {$d\in\left\{40,80,120,160,200\right\}$}. Specifically, we set the risk-free rate $\rho=0.06$, the volatility $\sigma_i=\frac{1}{\sqrt{10}}\min \left\{ 0.04 [(i - 1) \bmod 40], \, 1.6 - 0.04 [(i - 1) \bmod 40]\right\}$ and dividend yield $\delta_i=\rho-\frac{\sum_{i=1}^d\sigma_i^2}{d^2}(i-\frac{1}{2})-\frac{1}{50\sqrt{d}}$ for each $i\in\left\{1,2,\ldots,d\right\}$.
With these modifications, the equivalent one-dimensional American put option has a risk-free rate of 0.06, a dividend yield of 0.02932, and a volatility of 0.14615. We set the strike price to $\Gamma=110$. The theoretical price of the corresponding European option is 9.649 according to the Black-Scholes formula, and the benchmark value for the one-dimensional American put option, calculated using the binomial tree method on Smirnov’s website
with 20000 nodes, is 10.816. Table \ref{Table: American geometric average put-type option with risk-free rate 0.06} presents the learning results obtained by our ML algorithm for different dimensions, which again demonstrate the high accuracy and scalability of the algorithm.

\begin{table}
\centering
\small
\begin{tabular}{ccc}
\toprule
Dimension $d$ & \parbox[t]{3.5cm}{\centering Last learned price\\by our method }  & \parbox[t]{2cm}{\centering Runtime in sec.}\\
\midrule
40  & 10.828   & 1916 \\
80  & 10.803   & 2948\\
120 & 10.813   & 4141 \\
160 & 10.809   & 5665 \\
200 & 10.795   & 7732 \\
\bottomrule
\end{tabular}
\caption{Comparison of learned prices and runtime for different values of dimension $d$. The initial stock price $x_0=100^{1/\sqrt{d}}$, the time horizon $T=1$, the strike price $\Gamma=110$, the risk-free rate $\rho=0.06$, the volatility $\sigma_i=\frac{1}{\sqrt{10}}\min \{ 0.04 [(i - 1) \bmod 40], \, 1.6 - 0.04 [(i - 1) \bmod 40]$ and dividend yield $\delta_i=\rho-\frac{\sum_{i=1}^d\sigma_i^2}{d^2}(i-\frac{1}{2})-\frac{1}{50\sqrt{d}}$ for each $i\in\left\{1,2,\ldots,d\right\}$. The payoff function when exercising the option is $g(x)=\left(\Gamma - \left[ \prod_{k=1}^d |x^{(k)}|^{1/\sqrt{d}} \right]\right)^+$.
The benchmark value for the price of this option is 10.816.}
\label{Table: American geometric average put-type option with risk-free rate 0.06}
\end{table}

\subsection{Optimally stopping a fractional Brownian motion}\label{SubSec:fractional Brownian motion}
This example is adapted from Section 4.3 in \cite{becker2019deep} and Section 4.2.3 in \cite{dong2023randomized}, and is an optimal stopping problem for a fractional Brownian motion with Hurst parameter $H\in(0,1]$. By augmenting the state variable to include the entire history of the process, the problem becomes Markovian, and numerically a high-dimensional optimal stopping problem.

Specifically, a fractional Brownian motion with Hurst parameter $H \in (0, 1]$ is a continuous, centered Gaussian process $(W_t^H)_{t \geq 0}$ with covariance structure
\begin{equation*}
\mathbb{E}[W_t^H W_s^H] = \frac{1}{2} \left( t^{2H} + s^{2H} - |t - s|^{2H} \right),~~t,s\geq 0.
\end{equation*}
When $H=1/2$, $W^H$ is a standard Brownian motion, and  when $H\neq 1/2$, $W^H$ is neither a martingale nor a Markov process.
The objective is to
\begin{align*}
\max_{0\leq\tau\leq 1}  \mathbb{E}[W_{\tau}^H]
\end{align*}
over all $W^H$-stopping times $\tau\in[0,1]$.
This problem is non-Markovian and therefore falls outside the primary scope of this paper. However, we can approximate the optimal stopping value by discretizing the process. When combined with state space augmentation, this discretization transforms the problem into a Markovian one, thereby enabling the application of the martingale-based approach developed in this work.

To do this, denote $t_n=n/100$, for $n=0,1,\ldots,100$. For details on simulating trajectories of fractional Brownian motion, see \cite{becker2019deep}. Define the following 101-dimensional state process:
\begin{align*}
X_0 & = \left(W_{t_0}^H, W_{t_0}^H,W_{t_0}^H,\ldots,W_{t_0}^H\right),\quad\quad\quad\quad\
X_1  = \left(W_{t_1}^H,W_{t_0}^H, W_{t_0}^H,\ldots,W_{t_0}^H\right),\\
X_2 & = \left(W_{t_2}^H, W_{t_1}^H, W_{t_0}^H,\ldots,W_{t_0}^H\right),\quad \ldots, \quad
X_{100}  = \left(W_{t_{100}}^H, W_{t_{99}}^H, W_{t_{98}}^H,\ldots,W_{t_{0}}^H\right),
\end{align*}
and consider the following optimal stopping problem:
\begin{align*}
\max_{\tau\in\mathcal{T}}  \mathbb{E}[g(X_{\tau})],
\end{align*}
where $g(x)=x^{(1)}$ for a multi-dimensional vector $x$ with first component $x^{(1)}$, and $\mathcal{T}$ denotes the set of all $X$-stopping times.
The multi-dimensional state process  $X_{0}$, $X_{1}$, $\cdots $, $X_{100}$  is now Markovian, and the martingale-based approach applies. 


In our simulation, we set the penalty factor $K$ to be 100 (corresponding to a time mesh grid $\Delta t=1/100$ {as specified in Section \ref{Subsec:penalty_factor}}), the temperature parameter $\lambda$ to be 0.1, the training batch size to be $2^{10}$, the testing batch size to be $2^{15}$, and the initial learning rate to 0.01 which is halved every 200 training steps. We train for 3000 steps using Algorithm \ref{algorithm_ML}.

Table \ref{Table: fractional Brownian motion} presents the learning results for different Hurst parameter values. The learned value is evaluated every 10 training steps, resulting in 300 observations for each case. The final observation is reported in the second column of Table \ref{Table: fractional Brownian motion}.
Note that in this example, if the agent does not stop until the terminal time, the stopping value is 0 in theory. Thus, our RL algorithm can achieve a high {stopping value}. Our learning results are comparable to benchmarks in the literature \citep{becker2019deep, dong2023randomized}, demonstrating high accuracy in learning the stopping decisions in a non-Markovian (or equivalently a high-dimensional) setting.

\begin{table}
\centering
\small
\begin{tabular}{ccccc}
\toprule
$H$ & \parbox[t]{3.5cm}{\centering Last learned value\\by our method} & In \cite{dong2023randomized} & In \cite{becker2019deep} & \parbox[t]{2cm}{\centering Runtime in sec.} \\
\midrule
0.01 & 1.496  & 1.513 & 1.519 & 3013 \\
0.05 & 1.270  & 1.283 & 1.293 & 3008 \\
0.10 & 1.027  & 1.043 & 1.049 & 3039 \\
0.15 & 0.821  & 0.825 & 0.839 & 3476 \\
0.20 & 0.641  & 0.651 & 0.658 & 3539 \\
0.25 & 0.490  & 0.494 & 0.503 & 3519 \\
0.30 & 0.360  & 0.364 & 0.370 & 3525 \\
0.35 & 0.246  & 0.243 & 0.255 & 3532 \\
0.40 & 0.148  & 0.144 & 0.156 & 3519 \\
0.45 & 0.059  & 0.061 & 0.071 & 3519 \\
0.50 & 0.002  & 0.000 & 0.002 & 3526 \\
0.55 & 0.056  & 0.052 & 0.061 & 3528 \\
0.60 & 0.110  & 0.108 & 0.117 & 3516 \\
0.65 & 0.158  & 0.155 & 0.164 & 3528 \\
0.70 & 0.201  & 0.192 & 0.207 & 3352\\
0.75 & 0.240  & 0.233 & 0.244 & 3080\\
0.80 & 0.274  & 0.261 & 0.277 & 3009 \\
0.85 & 0.306  & 0.299 & 0.308 & 2454 \\
0.90 & 0.337  & 0.331 & 0.337 & 2536 \\
0.95 & 0.366  & 0.354 & 0.366 & 2370\\
\bottomrule
\end{tabular}
\caption{Comparison of stopping values and runtime for different values of Hurst parameter \( H \) in the example of optimally stopping a fractional Brownian motion. The horizon length $T=1$.}
\label{Table: fractional Brownian motion}
\end{table}

}

\section{Conclusions}\label{Sec:Conclusion}

In this paper, we study optimal stopping for diffusion processes in the exploratory RL framework of \cite{wang2020reinforcement}. Different from \cite{dong2023randomized} and \cite{dai2024learningcost} which directly randomize stopping times for exploration through mixed strategies characterized by Cox processes, we turn the problem into stochastic control by approximating the variational inequality with the penalized PDE. This enables us to apply all the available results and methods developed recently in RL for controlled diffusion processes \citep{jia2022policy,jia2022policygradient,jia2023q}.
Our simulation study shows that the resulting RL algorithms achieve comparable, and sometimes better, performance in terms of accuracy and speed than those in \cite{dong2023randomized} and \cite{dai2024learningcost}. \blue{More importantly, we show experimentally that the proposed algorithms also work well for high-dimensional problems.

There is a notable limitation of the current approach especially to high-dimensional problems. In our implementation, we use neural networks to approximate value functions, which enables us to overcome the curse of dimensionality. However, a challenge may arise if the value functions—such as discontinuous ones—cannot be well approximated by neural networks. Overcoming the difficulty is an interesting yet important question. }

We take option pricing as an application of the general theory in our experiments. There is a caveat in this: the expectation in the pricing formula is with respect to the risk-neutral probability under which data are {\it not observable}. While we can use simulation to test the learning efficiency of an RL algorithm, for real applications we need to take a formulation for option pricing that is under the physical measure due to the data-driven nature of RL. One example of such a formulation is the mean--variance hedging.

There are many other interesting problems to investigate along the lines of this paper, including a more substantial convergence and regret analysis, choice of the temperature parameter, and more complex underlying processes to stop.

\appendix

\section{Proofs of statements}

\subsection{Proof of Lemma \ref{lemma_K}}\label{Appendix: proof_penalty}


Suppose there is a positive sequence $\{K_n\}_{n\geq 1}$ that increases to infinity, and $v^{(n)}$ solves \eqref{hjb2} with $K=K_n$.
Denote  $a_n=v_t^{(n)}+\opla v^{(n)},\ b_n=g-v^{(n)}$,
$a_{\infty}=v_t+\opla v$, and $ b_{\infty}=g-v$,
where $v$ is the limit of $v^{(n)}$.
From \eqref{hjb2} we have $a_n=- K_n(b_n)^{+}\leq 0$.
Because $\left(a_n, b_n\right)$ converges to $\left(a_{\infty}, b_{\infty}\right)$ as $n\rightarrow \infty$, we get $a_{\infty}\leq 0$. On the other hand, on any compact region, $\{a_n\}_{n\geq 1}$ is locally bounded; so $(b_n)^+=-\frac{a_n}{K_n} \rightarrow 0, \text{as $n\rightarrow\infty$}$,
yielding $b_{\infty}\leq 0$.

If $b_{\infty}<0$, then there exists a large number $N$ such that $b_n<0$ for all $n>N$,
leading to $a_n=0$ for all $n>N$.
Thus, $a_{\infty}=0$ when $b_{\infty}<0$.

Hence, we have $a_{\infty}\leq 0$, $b_{\infty}\leq 0$ and $a_{\infty}b_{\infty}=0$,
which implies that $v$ solves \eqref{hjbp1}.

\subsection{Proof of Theorem \ref{vf}}\label{proof_HJB}
For notational simplicity, without loss of generality we only consider the case when $X$ and $W$ are both one-dimensional.

Write $\overline{R}_{t}=e^{-\int_{s}^{t}f(r, X_{r}, u_{r})\dr}, \ t\geq s$. Then $\dd \overline{R}_{t}=-\overline{R}_{t}f(t, X_{t}, u_{t})\dt, \ \overline{R}_{s}=1$.
It\^{o}'s lemma gives
\begin{equation*}
	\begin{aligned}
		&\quad\;\dd\; (\overline{R}_{t}v(t,X_{t})) \\
		&=\overline{R}_{t}\dd v(t,X_{t})+v(t,X_{t}) \dd \overline{R}_{t}\\
		&=\overline{R}_{t}\big[\left(v_{t}(t,X_t)+\opla_{u} v(t,X_t)\right)\dt+v_x(t,X_t)\sigma(t,X_t, u_{t})\dd W_{t}\big]-v(t,X_{t}) \overline{R}_{t} f(t, X_{t}, u_{t})\dt\\
		&=\overline{R}_{t}\big[\left(v_{t}(t,X_t)+\opla_{u} v(t,X_t)-v(t,X_t)f(t, X_{t}, u_{t})\right)\dt+v_x(t,X_t)\sigma(t,X_t, u_{t})\dd W_{t}\big]\\
		&\leq -\overline{R}_{t} G(t, X_{t}, u_{t}) \dt+\overline{R}_{t} v_x(t,X_t)\sigma(t,X_t, u_{t})\dd W_{t},
	\end{aligned}
\end{equation*}
where the last inequality is due to \eqref{hjbpp1} and $\overline{R}_{t}>0$.
Integrating both sides from $s$ to $T$ and then taking conditional expectation yield
\begin{equation*}
	\BE{\overline{R}_{T}v(T,X_{T})-\overline{R}_{s}v(s,X_{s})\;\big|\; X_s=x}
	\leq -\BE{\int_s^{T}\overline{R}_{t} G(t, X_{t}, u_{t})\dt\;\bigg|\; X_s=x }.
\end{equation*}
As $\overline{R}_{s}=1$, the above leads to
\begin{equation*}
	\begin{aligned}
		v(s, x)
		&\geq \BE{\int_s^{T}\overline{R}_{t} G(t, X_{t}, u_{t})\dt+\overline{R}_{T}v(T,X_{T})\;\bigg|\; X_s=x}\\
		&=\BE{\int_s^{T} e^{-\int_{s}^{t}f(r, X_{r}, u_{r})\dr}G(t, X_t, u_{t})\dt
			+e^{-\int_{s}^{T}f(r, X_{r}, u_{r})\dr}H(X_T)\;\bigg|\; X_s=x}.
	\end{aligned}
\end{equation*}
This means that $v$ is an upper bound of the optimal value function of problem \eqref{pp1}.
All the inequalities above become equalities if we take $u\in U$ that solves the optimization problem in \eqref{hjbpp1}.
Therefore, we conclude $v$ is the optimal value function of \eqref{pp1}.

\subsection{Proof of Theorem \ref{policy improvement}}
By the Feynman--Kac formula, for any $(t,x)$,
\begin{equation*}
	\partial_t J^{\pi}(t,x)+\opla J^{\pi}(t,x)+H\left(t,x,\pi,J^{\pi}\right)=0,
\end{equation*}
where
\begin{equation*}
	H\left(t,x,\pi,J^{\pi}\right) :=K \pi(t,x)\left(g(t,x)-J^{\pi}(t,x)\right) - \lambda \BH(\pi(t,x)).
\end{equation*}
Similarly, for any $(t,x)$,
\begin{equation*}
	\partial_t J^{\tilde{\pi}}(t,x)+\opla J^{\tilde{\pi}}(t,x)+H\left(t,x,\tilde{\pi},J^{\tilde{\pi}}\right)=0,
\end{equation*}
where $\tilde{\pi}$ is defined in \eqref{tilde_pi}. It turns out that
\begin{equation*}
	\tilde{\pi}(t,x)=\argmax_{\pi\in[0, 1]} H\left(t,x,\pi,J^{\pi}\right).
\end{equation*}

Denote $\Delta(t,x):=J^{\tilde{\pi}}(t,x)-J^{\pi}(t,x)$. Then
\begin{equation*}
	\partial_t \Delta(t,x)+\opla \Delta(t,x)+H\left(t,x,\tilde{\pi},J^{\tilde{\pi}}\right) - H\left(t,x,\pi,J^{\pi}\right)=0.
\end{equation*}
Because
\begin{equation*}
	\begin{aligned}
		& H\left(t,x,\tilde{\pi},J^{\tilde{\pi}}\right) - H\left(t,x,\pi,J^{\pi}\right) \\
		=&\ H\left(t,x,\tilde{\pi},J^{\tilde{\pi}}\right) - H\left(t,x,\tilde{\pi},J^{\pi}\right)+
		H\left(t,x,\tilde{\pi},J^{\pi}\right) - H\left(t,x,\pi,J^{\pi}\right) \\
		=& - K \tilde{\pi}(t,x)\Delta(t,x)+H\left(t,x,\tilde{\pi},J^{\pi}\right) - H\left(t,x,\pi,J^{\pi}\right),
	\end{aligned}
\end{equation*}
we have
\begin{equation*}
	\partial_t \Delta(t,x)+\opla \Delta(t,x) - K \tilde{\pi}(t,x)\Delta (t,x)=H\left(t,x,\pi,J^{\pi}\right) - H\left(t,x,\tilde{\pi},J^{\pi}\right)\leq 0,
\end{equation*}
where the last inequality is from the definition of $\tilde{\pi}$.
Besides, $\Delta(T,x)=0$ at the terminal time $T$.
Therefore, 0 is a sub-solution of $-\partial_t u(t,x) - \opla u(t,x)+K \tilde{\pi}(t,x) u(t,x)+H\left(t,x,\pi,J^{\pi}\right) - H\left(t,x,\tilde{\pi},J^{\pi}\right)=0$.
By the maximum principle, we have $\Delta(t,x)\geq 0$, resulting in $J^{\tilde{\pi}}(t,x)\geq J^{\pi}(t,x)$.

\blue{
	\subsection{Proof of Theorem \ref{convergence of policy improvement}}\label{Appendix:proof of convergence of policy improvement}
	
	Recall that $v^{\boldsymbol{\Pi}}$ is the optimal value function of the exploratory control problem \eqref{p3} with the optimal strategy \eqref{equ update pi}.
	By the same procedure as in the proof of Theorem \ref{policy improvement}, we can show that $v^{\boldsymbol{\Pi}}\geq \cdots\geq J^{\pi^{n+1}}\geq J^{\pi^n}\geq\cdots\geq J^{\pi^0}$.
	Next, we show the uniform convergence of the sequence $\left\{J^{\pi^n}\right\}_{n=1}^{\infty}$. To proceed, write $\left\|g\right\|_{\infty}= C$ for some positive constant $C$ due to the assumption that $g$ is uniformly bounded.
	
	
	For any $n\geq 1$, denote
	\begin{align*}
		f^n(t):= &\;\sup_x \left\{v^{\boldsymbol{\Pi}}(t,x)-J^{\pi^{n}}(t,x)\right\}\geq 0,\\
		\phi^{n}(t) :=&\; 2K \int_t^T f^n(s)ds,\\
		W(t,x):=&\;v^{\boldsymbol{\Pi}}(t,x)-J^{\pi^{n+1}}(t,x)-\phi^{n}(t).
	\end{align*}
	Using
	$	\partial_t J^{\pi^{n+1}}+\opla J^{\pi^{n+1}} + H\left(t,x,\pi^{n+1},J^{\pi^{n+1}}\right) = 0$
	and
	$	\partial_t v^{\boldsymbol{\Pi}}+\opla v^{\boldsymbol{\Pi}} + H\left(t,x,\pi^*,v^{\boldsymbol{\Pi}}\right) = 0$,
	we have
	\begin{align*}
		\partial_t W+\opla W & = \partial_t v^{\boldsymbol{\Pi}}+\opla v^{\boldsymbol{\Pi}} - \left(\partial_t J^{\pi^{n+1}}+\opla J^{\pi^{n+1}}\right) - \partial_t \phi^{n}\\
		&= H\left(t,x,\pi^{n+1},J^{\pi^{n+1}}\right) - H\left(t,x,\pi^*,v^{\boldsymbol{\Pi}}\right) - \partial_t \phi^{n}\\
		&= H\left(t,x,\pi^{n+1},J^{\pi^{n+1}}\right) - H\left(t,x,\pi^{n+1},v^{\boldsymbol{\Pi}}\right) \\&\quad+ H\left(t,x,\pi^{n+1},v^{\boldsymbol{\Pi}}\right) - H\left(t,x,\pi^*,v^{\boldsymbol{\Pi}}\right) - \partial_t \phi^{n}.
	\end{align*}
	Noticing that
	\begin{align*}
		H\left(t,x,\pi,J^{\pi}\right) &=K \pi(t,x)\left(g(t,x)-J^{\pi}(t,x)\right) - \lambda \BH(\pi(t,x)),\\
		\BH(\pi^{n+1}(t,x)) &= \pi^{n+1}\log \pi^{n+1} + (1-\pi^{n+1}) \log\left(1-\pi^{n+1}\right)\\
		& = \lambda^{-1}K \left(J^{\pi^n}-g\right)\left(1-\pi^{n+1}\right)-\log\left(1+\exp\left(\lambda^{-1}K\left(J^{\pi^n}-g\right)\right)\right),
	\end{align*}
	and
	\begin{align*}
		\BH(\pi^{*}(t,x)) &= \pi^{*}\log \pi^{*} + (1-\pi^{*}) \log\left(1-\pi^{*}\right)\\
		& = \lambda^{-1}K \left(v^{\boldsymbol{\Pi}}-g\right)\left(1-\pi^{*}\right)-\log\left(1+\exp\left(\lambda^{-1}K\left(v^{\boldsymbol{\Pi}}-g\right)\right)\right),
	\end{align*}
	we obtain
	\begin{align*}
		H\left(t,x,\pi^{n+1},v^{\boldsymbol{\Pi}}\right) & = K\pi^{n+1} \left(g-v^{\boldsymbol{\Pi}}\right) - K\left(J^{\pi^n}-g\right)\left(1-\pi^{n+1}\right)\\
		&\quad+\lambda\log\left(1+\exp\left(\lambda^{-1}K\left(J^{\pi^n}-g\right)\right)\right)
	\end{align*}
	and
	\begin{align*}
		H\left(t,x,\pi^{*},v^{\boldsymbol{\Pi}}\right) & = K\pi^{*} \left(g-v^{\boldsymbol{\Pi}}\right) - K\left(v^{\boldsymbol{\Pi}}-g\right)\left(1-\pi^{*}\right)\\
		&\quad +\lambda\log\left(1+\exp\left(\lambda^{-1}K\left(v^{\boldsymbol{\Pi}}-g\right)\right)\right)\\
		& = K\left(g-v^{\boldsymbol{\Pi}}\right) +\lambda\log\left(1+\exp\left(\lambda^{-1}K\left(v^{\boldsymbol{\Pi}}-g\right)\right)\right).
	\end{align*}
	Straightforward calculations show that
	\begin{align*}
		H\left(t,x,\pi^{n+1},J^{\pi^{n+1}}\right) - H\left(t,x,\pi^{n+1},v^{\boldsymbol{\Pi}}\right) = K\pi^{n+1}\left(v^{\boldsymbol{\Pi}}-J^{\pi^{n+1}}\right) \geq 0
	\end{align*}
	and
	\begin{align*}
		&H\left(t,x,\pi^{n+1},v^{\boldsymbol{\Pi}}\right) - H\left(t,x,\pi^*,v^{\boldsymbol{\Pi}}\right) \\
		=&\ K \left(v^{\boldsymbol{\Pi}}-J^{\pi^{n}}\right)\frac{1}{1+\exp\left(-\lambda^{-1}K\left(J^{\pi^n}-g\right)\right)}
		- \lambda \log \left(\frac{1+\exp\left(\lambda^{-1}K\left(v^{\boldsymbol{\Pi}}-g\right)\right)}{1+\exp\left(\lambda^{-1}K\left(J^{\pi^n}-g\right)\right)}\right).
	\end{align*}
	It follows from the general inequality $\left|\log(1+\exp(x))-\log(1+\exp(y))\right|\leq\left|x-y\right|$ that
	\begin{align*}
		\left| H\left(t,x,\pi^{n+1},v^{\boldsymbol{\Pi}}\right) - H\left(t,x,\pi^*,v^{\boldsymbol{\Pi}}\right)\right| \leq 2K \left(v^{\boldsymbol{\Pi}}-J^{\pi^{n}}\right)\leq 2K f^n(t).
	\end{align*}
	Thus,
	\begin{align*}
		& H\left(t,x,\pi^{n+1},v^{\boldsymbol{\Pi}}\right) - H\left(t,x,\pi^*,v^{\boldsymbol{\Pi}}\right) - \partial_t \phi^{n} \\
		=&\ H\left(t,x,\pi^{n+1},v^{\boldsymbol{\Pi}}\right) - H\left(t,x,\pi^*,v^{\boldsymbol{\Pi}}\right) + 2K f^n(t) \geq 0.
	\end{align*}
	Consequently,
	\begin{align*}
		\partial_t W+\opla W  \geq 0,
	\end{align*}
	which together with $W(T,x)=0$ implies  $W\leq 0$ by the maximum principle. Thus, we conclude
	\begin{align*}
		v^{\boldsymbol{\Pi}}(t,x)-J^{\pi^{n+1}}(t,x)\leq \phi^{n}(t) = 2K \int_t^T f^n(s)ds.
	\end{align*}
	The definition of $f^{n}$ yields
	\begin{align*}
		f^{n+1}(t)\leq 2K \int_t^T f^n(s)ds.
	\end{align*}
	By induction, we have
	\begin{align*}
		f^{n+1}(t)\leq \left(2K\right)^n \frac{(T-t)^n}{n!}  \sup_t f^1(t)\leq
		\frac{(2KT)^n}{n!}  \sup_t f^1(t).
	\end{align*}
	
	Recall
	\begin{equation*}
		J^{\pi}(s,x)=\BE{\int_s^{T} e^{-K\int_{s}^{t}\pi_{v}\dv}\big[K g(t,X_t) \pi_{t}-\lam \BH(\pi_{t})\big]\dt+e^{-K\int_{s}^{T}\pi_{v}\dv}g(T,X_T) \;\bigg|\; X_s=x },
	\end{equation*}
	$\left\|g\right\|_{\infty}= C$,  $e^{-K\int_{s}^{t}\pi_{v}\dv}\leq 1$ for any $0\leq t\leq T$, and $0\leq \pi_t\leq 1$ and $$|\BH(\pi_{t})|\leq  \sup_{x\in(0,1)}|x\log x+(1-x)\log (1-x)|=\log 2.$$ We thus have
	\begin{equation*}
		|J^{\pi}(s,x)|\leq \BE{\int_s^{T}  \big[KC+\lam \log 2\big]\dt+C\;\bigg|\; X_s=x }\leq (KC+\lam\log 2)T+C
	\end{equation*}
	for any admissible $\pi$.
	It hence follows
	\begin{align*}
		\sup_t f^1(t) = \sup_{t,x} \left\{v^{\boldsymbol{\Pi}}(t,x)-J^{\pi^{1}}(t,x)\right\} & \leq  2((KC+\lam\log 2)T+C).
	\end{align*}
	
	Consequently,
	\begin{align*}
		\left\| v^{\boldsymbol{\Pi}} - J^{\pi^{n+1}} \right\|_{\infty}
		\leq   \frac{(2KT)^n}{n!}  2((KC+\lam\log 2)T+C),
	\end{align*}
	which converges to 0 as $n$ increases to infinity when $K$ and $\lambda$ are fixed. This concludes the proof.

	Finally, let us complement the proof by 
	numerically examining the theoretical convergence rate. 
	As shown in Figure \ref{fig:numerical_diagnostics}, the NN approximation for the stopping value of $(t_0,x_0)=( 0,40)$ converges to its theoretical value 5.333, which is calculated using the finite difference method, and the relative error decreases exponentially. In implementation, we use stochastic gradient descent to update the NN parameters at each training step, which prevents us from performing the policy evaluation in \eqref{Eq: PolicyEvaluateFeynmanKac} with a complete accuracy. Consequently, in the experiment we do not achieve the
	$n!$ rate of decay in relative error as predicted by Theorem \ref{convergence of policy improvement}. Nevertheless, exponential decay is sufficient to ensure rapid convergence.
	
	\begin{figure}[htb]
		\centering
		\subfigure
		{
			\includegraphics[scale=0.45]{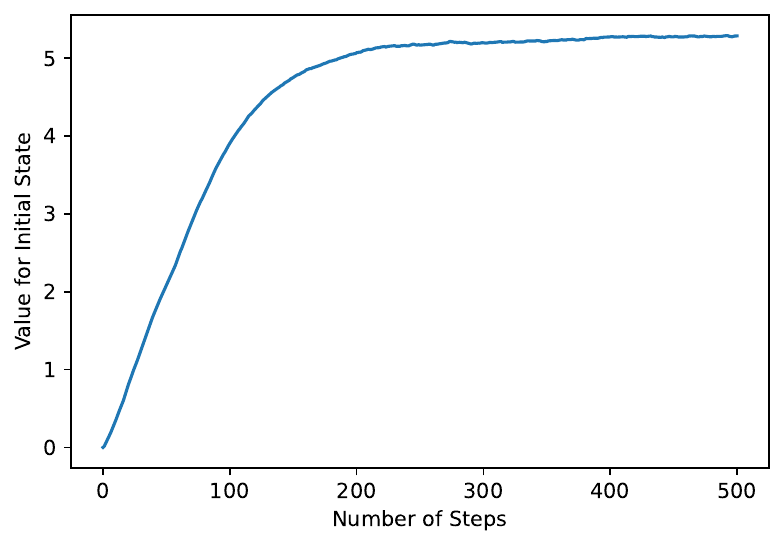}\label{fig:numerical_diagnostics_value_initial}
		}
		\subfigure
		{
			\includegraphics[scale=0.45]{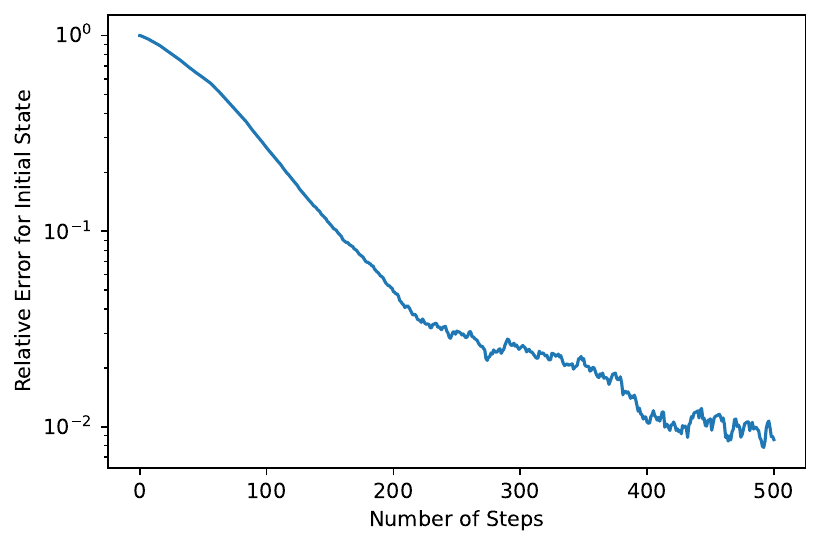}\label{fig:numerical_diagnostics_log_relative_error_initial}
		}
		\caption{	
			Numerical diagnostics for the convergence of the RL algorithms. We use the offline ML Algorithm \ref{algorithm_ML} to learn the stopping value of $(t_0,x_0)=( 0,40)$, approximated by an NN, for the American put example in Section \ref{Sec:Simulation} with $\lambda=0.01$ and $K=50$. The left panel displays the learning curve of the value and the right panel shows that of the relative error, both with respect to training steps. The y-axis of the right panel is plotted on a logarithmic scale.}
		\label{fig:numerical_diagnostics}
	\end{figure}
}

\section{Learning early exercise premium}\label{learn early exercise premium}
In this section, we discuss how to learn the early exercise premium of an American put option.

Recall we have an explicit pricing formula for a European put option:
\begin{equation}
	\begin{cases}\label{European_put_value}
		V_E(t, x)=\Gamma e^{-\rho(T-t)}N\left(-d_{-}(t,x)\right) - xN\left(-d_{+}(t,x)\right), \ t\in[0,T),\\
		V_E(T,x)=g(x),
	\end{cases}	
\end{equation}
where $d_{\pm}(t,x)=\frac{1}{\sigma\sqrt{T-t}}\left[\log\left(\frac{x}{\Gamma}\right)+\left(\rho\pm\frac{1}{2}\sigma^2\right)\left(T-t\right)\right]$,
and $N(\cdot)$ is the cumulative standard normal distribution function $N(y)=\frac{1}{\sqrt{2\pi}}\int_{-\infty}^y e^{-\frac{z^2}{2}} \dz=\frac{1}{\sqrt{2\pi}}\int_{-y}^{\infty} e^{-\frac{z^2}{2}} \dz$.

The early exercise premium of the American put option is
\begin{equation}
	\tilde{v}(t,x):=v(t,x)-V_E(t,x),
\end{equation}
where $v(t,x)$ is the option value at the time--state pair $(t,x)$.
The associated payoff function is
\begin{equation}
	\tilde{g}(t,x):=g(x)-V_E(t,x).
\end{equation}
By \eqref{hjbp1} and the fact that $V_E$ satisfies $\frac{\partial}{\partial t}V_E+\opla V_E=0$ with terminal condition $V_E(T,x)=g(x)$,
we have
\begin{equation}
	\begin{cases}
		\max\{\tilde{v}_{t}+\opla \tilde{v}, \ \tilde{g}-\tilde{v}\}=0,\ (t,x)\in[0,T)\times\R, \\
		\tilde{v}(T,x)=0.
	\end{cases}
\end{equation}
Thus, the terminal condition is smooth now, and the related PDE by penalty approximation is
\begin{equation*}
	\begin{cases}
		\tilde{v}_{t}+\max_{u\in\{0,1\}} \left\{\opla \tilde{v}- Ku\tilde{v}+K\tilde{g}(t,x)u\right\}=0,
		\ (t,x)\in[0,T)\times\R,\\
		\tilde{v}(T,x)=0.
	\end{cases}
\end{equation*}
Consequently, by Theorem \ref{vf} the corresponding control problem is
\begin{equation}
	\max_{u\in \{0,1\} }\; \BE{\int_s^{T} Ke^{-K\int_{s}^{t}u_{r}\dr}\tilde{g}(t,X_t)u_{t}\dt \;\bigg|\; X_s=x},
\end{equation}
and the entropy-regularized exploratory problem is
\begin{equation}\label{v_tilde randomized}
	\max_{\pi\in [0,1]} \; \BE{\int_s^{T} e^{-K\int_{s}^{t}\pi_{v}\dv}\big[K \tilde{g}(t,X_t) \pi_{t}-\lam \BH(\pi_{t})\big]\dt \;\bigg|\; X_s=x }.
\end{equation}
While we could use the algorithms in Section \ref{Section: RL algorithms} to learn $\tilde{v}$, $\tilde{g}$ is not fully known to us since $\sigma$ (which appears in $V_E$) is an unknown model parameter.

To deal with the unknown payoff function $\tilde{g}$,
recall
\begin{equation}\label{payoff_g_tilde}
	\tilde{g}(t,x)=g(x) - V_E(t,x)=\left(\Gamma - x\right)^+- V_E(t,x).
\end{equation}
Hence, in the formula of $V_E$, namely \eqref{European_put_value}, we replace $\sigma$ by $\phi$, which is a parameter to learn with a properly chosen initial value. Denote by $V_E^{\phi}$ and $\tilde{g}^{\phi}$ the parameterized European option value function and payoff function, respectively. 
Note that
\begin{equation}\label{V_E expectation}
	V_E(t,x)=\mathbb{E}\left[e^{-\rho(T-t)}g(X_T) \;\bigg|\; X_t=x\right]=\mathbb{E}\left[e^{-\rho(T-t)}\left(\Gamma-X_T\right)^+\;\bigg|\; X_t=x\right].
\end{equation}
This is a policy evaluation problem. Thanks to the martingale approach developed in \cite{jia2022policy}, we can learn/update $\phi$ by minimizing the martingale loss w.r.t. $\phi$:
\begin{equation}
	\begin{aligned}\label{martingale_loss_payoff}
		\quad\quad\text{ML}\left(\phi\right)
		&\approx \frac{1}{2}\mathbb{E}\left[\sum_{l=0}^{L-1}\left( e^{-\rho T}V_E\left(T,X_T\right)- e^{-\rho t} V_E^{\phi}\left(t_l,X_l\right) \right)^2\Delta t\right]\\
		&=\frac{1}{2}\mathbb{E}\left[\sum_{l=0}^{L-1}\left (e^{-\rho T}\left(\Gamma-X_T\right)^+- e^{-\rho t} V_E^{\phi}\left(t_l,X_l\right)\right)^2\Delta t\right].
	\end{aligned}
\end{equation}
In each episode, we simulate a batch of stock price trajectories and then use the stochastic gradient descent method to minimize the above loss function to update $\phi$.
Figure \ref{fig:learn payoff} shows the efficacy of this policy evaluation algorithm, with an initial guess of the unknown $\sigma$ to be 0.8, twice the true value.

\begin{figure}[htb]
	\centering
	\includegraphics[scale=0.45]{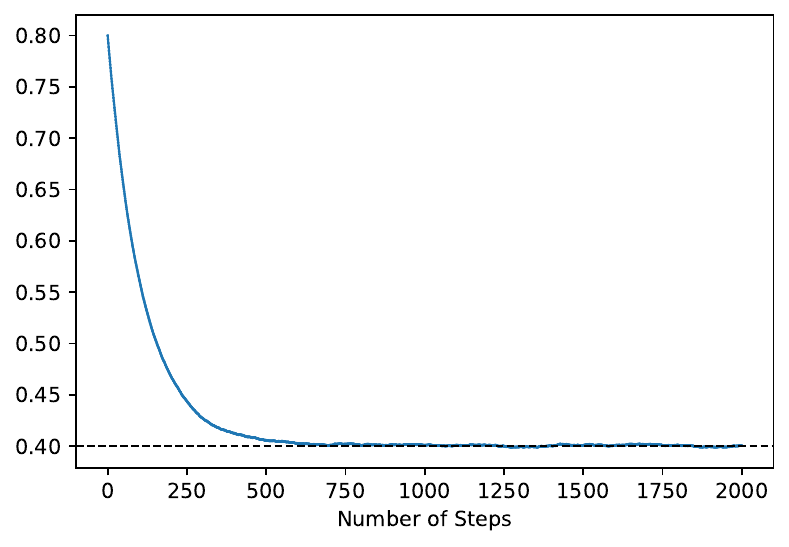}
	\caption{Learning curve of $\phi$, which is used to parameterize the payoff function \eqref{payoff_g_tilde} by minimizing the martingale loss \eqref{martingale_loss_payoff}. The initialization of $\phi$ is 0.8, and the true value of $\sigma$ is 0.4. The value of $\phi$ after 2000 steps is 0.4006.}
	\label{fig:learn payoff}
\end{figure}

We emphasize that in this particular example of European option, due to the availability of the explicit formula of the option price, the critic parameter $\phi$ {\it happens} to be a model parameter $\sigma$.
In general, for more complicated problems with little structure, the actor/critic parameters may be very different from the model primitives and may themselves have no physical/practical meanings at all. Estimating model parameters is {\it not} a goal of RL in our framework. 


\section{Additional results for sensitivity tests}\label{Appendix: K=50}
In Figure \ref{fig: learn_curve_ML_TD0_by_VE_as_lam_varies_fix_K_49}, we show a sensitivity test when the penalty factor approaches its maximum possible value, i.e., $K=50$, under our time discretization setting. In this case, the two ways of calculating the option price have almost the same results, which is consistent with Lemma \ref{lemma_K}. The trade-off between learning accuracy and speed observed previously still holds.

\begin{figure}[htb]
	\centering
	\subfigure[$\mathcal{P}_{\mathrm{stopping}}$ by ML]
	{\includegraphics[scale=0.38]{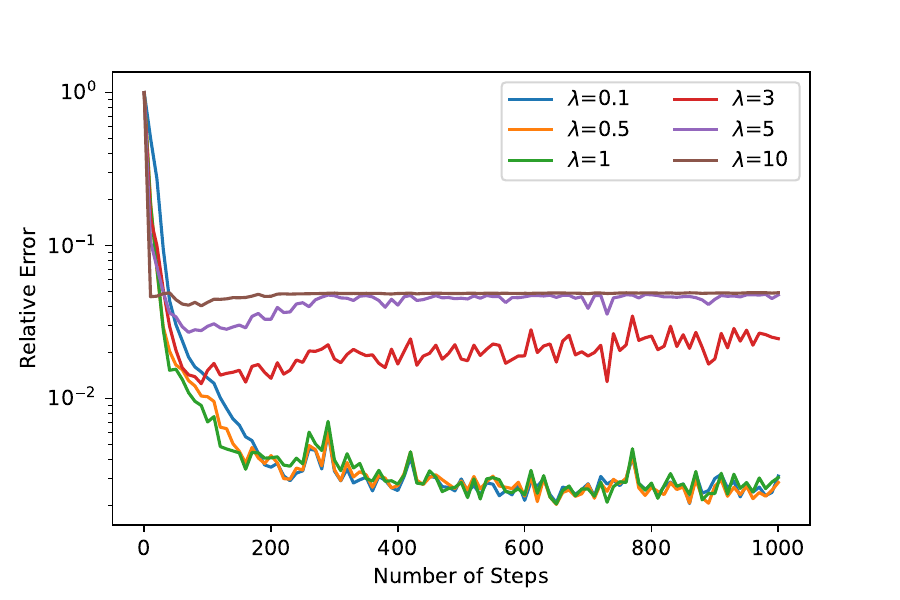}
	}
	\subfigure[$\mathcal{P}_{\mathrm{stopping}}$ by TD(0)]
	{\includegraphics[scale=0.38]{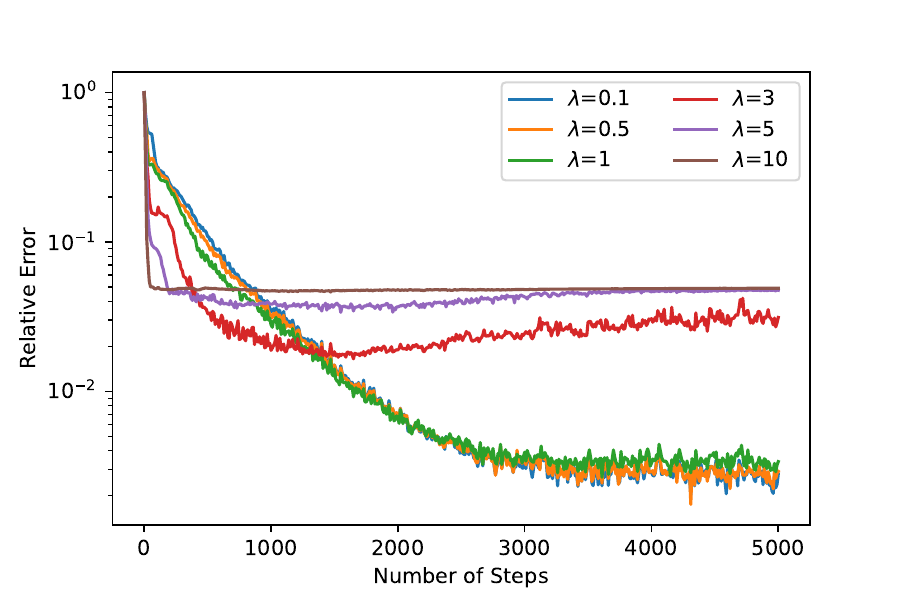}
	}
	\subfigure[$\mathcal{P}_{\mathrm{control}}$ by ML]
	{\includegraphics[scale=0.38]{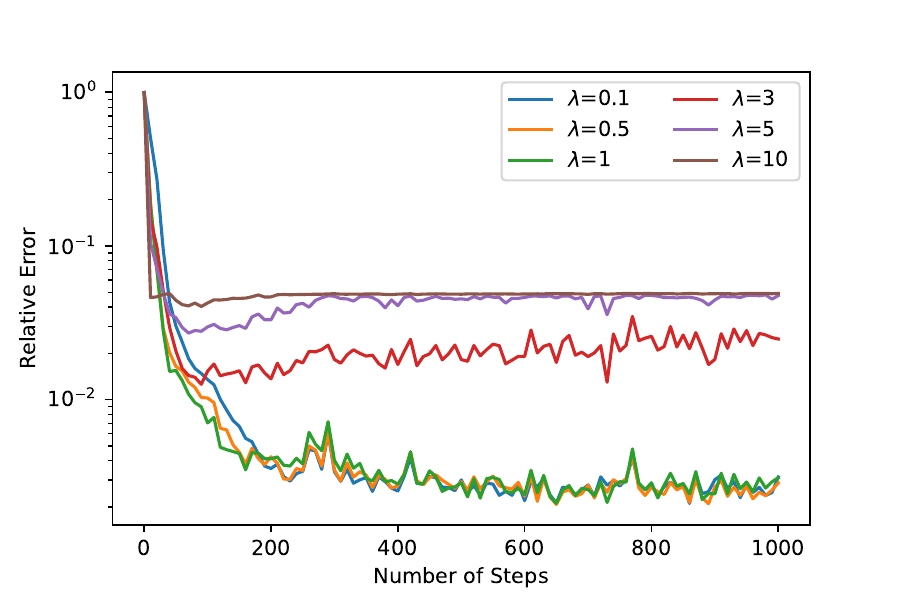}
	}
	\subfigure[$\mathcal{P}_{\mathrm{control}}$ by TD(0)]
	{\includegraphics[scale=0.38]{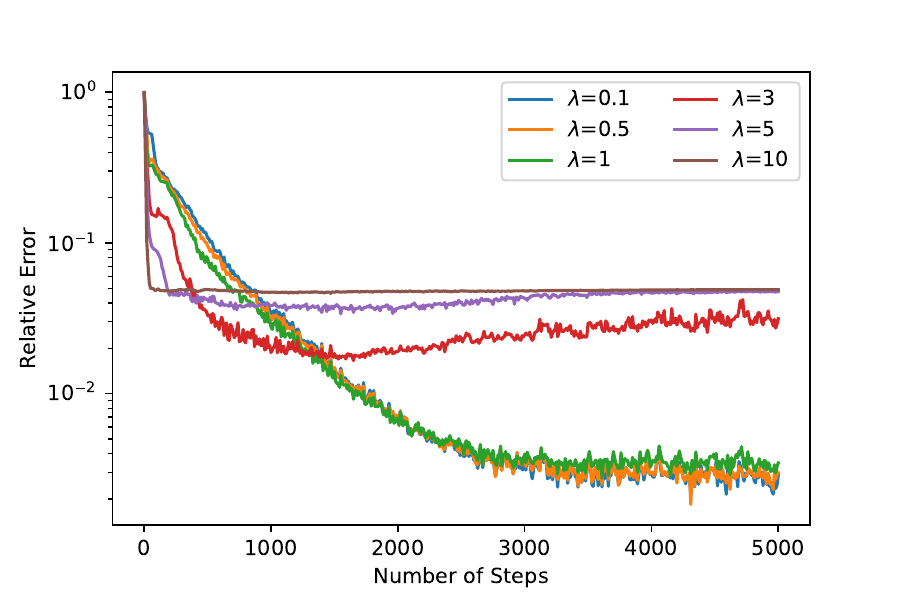}
	}
	\caption{Learning curves of option prices $\mathcal{P}_{\mathrm{stopping}}$ defined in \eqref{option_price_by_stopping} and $\mathcal{P}_{\mathrm{control}}$ defined in \eqref{option_price_by_control} by offline ML and online TD(0) algorithms. In each panel, the penalty factor $K$ is fixed to 50 and the temperature parameter $\lambda$ takes value from the set $\{0.1, 0.5, 1, 3, 5, 10\}$ while the other parameter values are set as in Table \ref{table: benchmark values}. We train the ML algorithm for 1000 steps and TD(0) algorithm for 5000 steps.} 
\label{fig: learn_curve_ML_TD0_by_VE_as_lam_varies_fix_K_49}
\end{figure}

\section{RL for Dynkin games} \label{RL_Dynkin_penalty}
Theorem 1 in \cite{dai2024learningcost} reveals that Merton's problem with proportional transaction costs is equivalent to a Dynkin game that is essentially an optimal stopping problem. Here, we focus on the Black--Scholes market where the stock price follows
\begin{equation*}
\dS_t=\alpha S_t\dt+\sigma S_t\dw_t,
\end{equation*}
where $\alpha$ and $\sigma$ are constants but unknown to the agent.
The value function of the Dynkin game satisfies the following variational inequality problem
\begin{equation}\label{HJB_Dynkin_game}
\max\left\{\min\left\{-w_t-\mathcal{L}w, w-L(x)\right\}, w-U(x)\right\}=0
\end{equation}
with terminal condition
\begin{equation}
w(T,x)=\frac{x}{x+1-\theta},
\end{equation}
where $\theta\in (0,1)$ is the proportional transaction cost rate, $x$ is the bond--stock position ratio, $\mathcal{L}$ is the generator of this ratio process, and $L(x)$ and $U(x)$ are respectively the lower and upper obstacles of $w(t,x)$.

It is proved in \cite{zhang2022solution} that \eqref{HJB_Dynkin_game} can be approximated by the following penalized PDE:
\begin{equation}
-w_t - \mathcal{L}w - K \left(w-L(x)\right)^{-}+K \left(w-U(x)\right)^{+}=0,
\end{equation}
where $K>0$ is a large penalty factor. This PDE can be equivalently written as
\begin{equation}\label{penalty_PDE_Dynkin}
w_t+\max_{a\in\left\{0, 1\right\}}\min_{b\in\left\{0, 1\right\}}\left\{\mathcal{L}w-wf(t,x,a,b)+G(t,x,a,b)\right\}=0,
\end{equation}
where
\begin{equation*}
f(t,x,a,b)=K (a+b),\quad G(t,x,a,b)=K \left(L(x)a+U(x)b\right).
\end{equation*}

Similar to the setup in Section \ref{Sec:transform_to_control}, we consider a constrained differential game:
\begin{equation}\label{prob_max_min}
w(s,x)=\max_{a\in\setu}\min_{b\in\setu} J(s,x; a,b),
\end{equation}
where
\begin{equation*}
\begin{aligned}
	J(s,x;a,b)=\mathbb{E}\Bigg[\int_s^{T} &e^{-\int_{s}^{t}f(r, X_{r}, a_{r}, b_r)\dr}G(t, X_t, a_t, b_t)\dt\nn\\ &+e^{-\int_{s}^{T}f(r, X_{r}, a_{r}, b_r)\dr}H(X_T)\;\bigg|\; X_s=x\Bigg],
\end{aligned}
\end{equation*}
and the controlled $n$-dimensional process $X$ follows
\begin{equation}
\dd X_t=b(t, X_t, a_t, b_t)\dt+\sigma(t,X_t, a_t, b_t)\dw_t.
\end{equation}
We have the following result.

\begin{theorem}[Verification theorem]
Suppose $w$ is a classical solution of
\begin{equation}
	\begin{cases}\label{HJB_max_min}
		w_{t}+\max_{a\in U}\min_{b\in U}\left\{\opla_{a,b} w-wf+G\right\}=0, ~~ (t,x)\in[0,T)\times\R^{n},\\
		w(T,x)=H(x),
	\end{cases}
\end{equation}
where
\begin{equation*}
	\opla_{a,b} w=\frac{1}{2}\sum_{i,j,k}\sigma_{i,k}(t,x,a,b)\sigma_{j,k}(t,x,a,b)\frac{\partial^2w }{\partial x_{i}\partial x_{j}}
	+\sum_{i} b_{i}(t,x,a,b)\frac{\partial w}{\partial x_{i}}.
\end{equation*}
Then it is the value function of problem \eqref{prob_max_min}.
\end{theorem}

\begin{proof}
Let $\overline{R}_{t}=e^{-\int_{s}^{t}f(r, X_{r}, a_{r}, b_{r})\dr}, \ t\geq s$.	Then $\dd \overline{R}_{t}=-\overline{R}_{t}f(t, X_{t}, a_{t}, b_{t})\dt$ and $\overline{R}_{s}=1.$	

For each fixed ${a\in U}$, let $b^*_{t}=b_t^*(x,a)$ denote the minimizer of $\opla_{a,b} w-wf+G$.
By \eqref{HJB_max_min}, $$w_{t}+\max_{a\in U}\left\{\opla_{a,b^*} w-wf(t,x,a,b^*)+G(t,x,a,b^*)\right\}=0.$$
Then, similar to the proof in Appendix \ref{proof_HJB}, we get for any admissible process $a\in\setu$,
\begin{equation*}
	\begin{aligned}
		w(s, x)
		&\geq \BE{\int_s^{T}\overline{R}_{t}^{a,b^*} G(t, X_{t}, a_t,b_t^*)\dt+\overline{R}_{T}^{a,b^*}w(T,X_{T})\;\bigg|\; X_s=x}\\
		&=\BE{\int_s^{T} e^{-\int_{s}^{t}f(r, X_{r}, a_r,b_r^*)\dr}G(t, X_t, a_t,b_t^*)\dt
			+e^{-\int_{s}^{T}f(r, X_{r}, a_r,b_r^*)\dr}H(X_T)\;\bigg|\; X_s=x}\\
		&=J(s,x;a,b^{*})\\
		& \geq \min_{b\in\setu} J(s,x;a,b).
	\end{aligned}
\end{equation*}
Since $a\in\setu$ is arbitrarily chosen, the above implies
\begin{equation*}
	w(s, x)
	\geq \max_{a\in \setu}\min_{b\in \setu} J(s,x;a,b).
\end{equation*}
Hence, $w$ is an upper bound of the value function of problem \eqref{prob_max_min}.

On the other hand, 	 if we choose a pair $(a^{*}, b^{*}_{t}(x,a^{*}))$ to solve
$$\max_{a\in U}\min_{b\in U}\left\{\opla_{a,b} w-wf+G\right\},$$ all the inequalities in the above become equalities, yielding that $w$ is the value function of \eqref{prob_max_min}. 
\end{proof}

Following the same procedure as in Section \ref{Sec:transform_to_control}, we find that the solution to \eqref{penalty_PDE_Dynkin} satisfying the terminal condition is the value function of the following stochastic differential game \citep{fleming2006controlled}:
\begin{equation}
w(s,x)=\max_{a_{t}\in\{0,1\}}\min_{b_{t}\in\{0,1\}} J(s,x;a, b),
\end{equation}
where
\begin{equation}\label{value_func_control_Dynkin}
\begin{aligned}
	J(s,x;a,b)=\mathbb{E}\Big[\int_s^{T} &Ke^{-K\int_{s}^{t}(a_r+b_r)\dr}\left(L(X_t)a_t+U(X_t)b_t\right)\dt\\\nonumber
	&+e^{-K\int_{s}^{T}(a_r+b_r)\dr}\frac{X_T}{X_T+1-\theta}\;\bigg|\; X_s=x\Big].
\end{aligned}
\end{equation}

Next, we randomize $\{a_t\}_{t\in[0,T]}$ and $\{b_t\}_{t\in[0,T]}$ respectively by $\{\nu_t\}_{t\in[0,T]}$ and $\{\mu_t\}_{t\in[0,T]}$, where
\begin{equation*}
\nu_t=\mathbb{P}\left(a_t=1\right)=1 - \mathbb{P}\left(a_t=0\right), \quad \mu_t=\mathbb{P}\left(b_t=1\right)=1 - \mathbb{P}\left(b_t=0\right).
\end{equation*}
It turns out that the exploratory value function of the Dynkin game is
\begin{equation}\label{value_Dynkin_randomized}
w^{\lambda}(s,x)=\max_{\nu_{t}\in[0,1]}\min_{\mu_{t}\in[0,1]} J^{\lambda}(s,x;\nu, \mu),
\end{equation}
where
\begin{equation}\label{value_func_control_Dynkin_randomize}
\begin{aligned}
	J^{\lambda}(s,x;\nu,\mu)=\mathbb{E}\Big[\int_s^{T} &Ke^{-K\int_{s}^{t}(\nu_r+\mu_r)\dr}\left[L(X_t)\nu_t+U(X_t)\mu_t-\lam \left(\BH(\nu_{t}) - \BH(\mu_t)\right)\right]\dt\\\nonumber
	&+e^{-K\int_{s}^{T}(\nu_r+\mu_r)\dr}\frac{X_T}{X_T+1-\theta}\;\bigg|\; X_s=x\Big].
\end{aligned}
\end{equation}
The HJB equation of problem \eqref{value_Dynkin_randomized} is thus
\begin{equation}
w^{\lambda}_t+\mathcal{L}w^{\lambda}+\max_{\nu\in[0,1]}\min_{\mu\in[0,1]}\left\{-w^{\lambda}K(\nu+\mu)+K\left(L(x)\nu+U(x)\mu\right)-\lambda\left(\BH(\nu)-\BH(\mu)\right)\right\}=0,
\end{equation}
and the optimal solutions are
\begin{equation}
\begin{aligned}\label{optimal_prob_nu_mu_Dynkin}
	\nu^* &=\frac{\exp\left(\frac{K}{\lambda}(L(x)-w^{\lambda})\right)}{\exp\left(\frac{K}{\lambda}(L(x)-w^{\lambda})\right)+1}=\frac{1}{1+\exp\left(-\frac{K}{\lambda}(L(x)-w^{\lambda})\right)},\\
	\mu^* &=\frac{\exp\left(\frac{K}{\lambda}(w^{\lambda}-U(x))\right)}{\exp\left(\frac{K}{\lambda}(w^{\lambda}-U(x))\right)+1}=\frac{1}{1+\exp\left(-\frac{K}{\lambda}(w^{\lambda}-U(x))\right)}.
\end{aligned}
\end{equation}

Consequently, if we choose the TD error criterion for policy evaluation, the RL algorithm is essentially the same as Algorithm 1 in \cite{dai2024learningcost}, except that $\nu, \mu, \nu^*, \mu^*$ in their algorithm should be replaced by $K\nu, K\mu$, $\nu^*, \mu^*$ in \eqref{optimal_prob_nu_mu_Dynkin} in this section, and their entropy term should be replaced by the differential entropy in this paper.

\section{Finite difference method}\label{finite_difference}

In this section, we present (classical, model-based) algorithms for pricing a finite-horizon American put option and obtaining its stopping boundary. We use the penalty method in \cite{forsyth2002quadratic}. The payoff function is $g(x)=\left(\Gamma-x\right)^{+}$ with $x$ being the stock price and $\Gamma$ the strike price.
We consider the PDE \eqref{hjb2} and let $u(t,y) := v(t,x)$ with $x=e^{y}$.
In the finite difference scheme below, the time horizon $\left[0,T\right]$ is divided to $N_t$ equally spaced intervals with $t_i=i\Delta t,\ \text{for } i=0,\ldots, N_t$, and the range for $y$ is chosen to be $\left[-N, N\right]$ with $N$ being a large positive number, which is further divided into $N_y$ equally spaced intervals with $y_j = -N + j\Delta y\ \text{for } j = 0,\ldots,N_y$.
Thus, $u_{i,j}$ denotes the numerical value at the mesh point $\left(t_i, y_j\right)$, namely, $u_{i,j}:=u\left(t_i,y_j\right)$ for $i=0,\ldots,N_t, j=0,\ldots,N_y$.

In the following, let $u_{i,j}^k$ denotes the $k$-th iteration value of $u_{i,j}$. The terminal condition is
\begin{equation}\label{u_terminal}
u_{N_t,j} = g(e^{y_j})
\end{equation}
for $j=0,\ldots,N_y$.
Similarly, the boundary conditions at $y_0$ and $y_{N_y}$ are set to be
\begin{equation}\label{u_boundary}
u_{i,0} = g(e^{y_0}),\ \text{and }	u_{i,N_y} = g(e^{y_{N_y}})
\end{equation}
for $i=0,\ldots,N_t$.
The Black--Scholes operator $v_t(t,x)+\frac{1}{2}\sigma^2x^2v_{xx}(t,x)+\rho xv_x(t,x)-\rho v(t,x)$ changes to
\begin{equation*}
\mathcal{L}_{BS}u(t,y):=u_{t}(t,y)+\frac{1}{2}\sigma^2u_{yy}(t,y)+\left(\rho-\frac{1}{2}\sigma^2\right)u_y(t,y)-\rho u(t,y).
\end{equation*}

By the fully implicit finite difference scheme,
\begin{equation*}
\begin{aligned}
	\frac{\partial}{\partial t} u(t_i, y_j) &= \frac{u_{i+1,j} - u_{i,j}}{\Delta t},\\
	\frac{\partial}{\partial y} u(t_i, y_j) &= \frac{u_{i, j+1}-u_{i, j-1}}{2\Delta y},\\
	\frac{\partial^2}{\partial y^2} u(t_i, y_j) &= \frac{u_{i, j+1} - 2u_{i,j} + u_{i, j-1}}{\left(\Delta y\right)^2},
\end{aligned}
\end{equation*}
for $j=1,\ldots,N_y-1$ and $i=0,\ldots,N_t-1$. The discretized Black--Scholes operator is therefore
\begin{equation}\label{L_BS_finite}
\mathcal{L}_{BS} u_{i,j} = \frac{u_{i+1,j}-u_{i,j}}{\Delta t} + \beta u_{i, j+1} - \left(\alpha+\beta+\rho\right) u_{i,j} + \alpha u_{i, j-1},
\end{equation}
where
\begin{equation*}
\alpha = \frac{\sigma^2}{2\left(\Delta y\right)^2} - \frac{\rho - \frac{1}{2}\sigma^2}{2\Delta y},\ \text{and } \beta = \frac{\sigma^2}{2\left(\Delta y\right)^2} + \frac{\rho - \frac{1}{2}\sigma^2}{2\Delta y}.
\end{equation*}

The PDE in \eqref{hjb2} becomes
\begin{equation}\label{ti_yj_PDE}
-\mathcal{L}_{BS} u(t,y) = K \left(g(e^y)-u(t,y)\right)^{+}.
\end{equation}
For each mesh point $\left(t_i, y_j\right)$, we use the following approximation by Newton iteration:
\begin{equation}\label{NewtonIter_VI}
\begin{aligned}
	\left(g(e^{y_j})-u_{i,j}^{k+1}\right)^{+} & = \left(g(e^{y_j})-u_{i,j}^{k}+u_{i,j}^{k}-u_{i,j}^{k+1}\right)^{+}\nonumber\\
	& \approx \left(g(e^{y_j})-u_{i,j}^{k}\right)^{+} + 1_{g(e^{y_j})-u_{i,j}^{k} > 0} \left(u_{i,j}^{k}-u_{i,j}^{k+1}\right)\nonumber\\
	& = \left(g(e^{y_j})-u_{i,j}^{k+1}\right)1_{g(e^{y_j})-u_{i,j}^{k} > 0},
\end{aligned}
\end{equation}
where $1_{A}$ is the indicator function of a set $A$.
By \eqref{L_BS_finite} and \eqref{NewtonIter_VI}, it turns out that \eqref{ti_yj_PDE} reads
\begin{equation*}
- \frac{u_{i+1,j}-u_{i,j}^{k+1}}{\Delta t} - \beta u_{i, j+1}^{k+1} + \left(\alpha+\beta+\rho\right) u_{i,j}^{k+1} - \alpha u_{i, j-1}^{k+1} =
K \left(g(e^{y_j})-u_{i,j}^{k+1}\right)1_{g(e^{y_j})-u_{i,j}^{k} > 0},
\end{equation*}
which simplifies to
\begin{equation}\label{NewtonIter_equ}
- \beta u_{i, j+1}^{k+1} + \left(\frac{1}{\Delta t} + \alpha+\beta+ \rho + K 1_{g(e^{y_j})-u_{i,j}^{k} > 0}\right) u_{i,j}^{k+1} - \alpha u_{i, j-1}^{k+1} = \frac{u_{i+1,j}}{\Delta t} +
K g(e^{y_j}) 1_{g(e^{y_j})-u_{i,j}^{k} > 0}.
\end{equation}
In each iterative step, the quantities in the RHS of \eqref{NewtonIter_equ} regarding future values $u_{i+1,j}$ for all $j$ and previous estimates $u_{i,j}^k$ for all $i$ and $j$ are known, and the current estimates $u_{i,j}^{k+1}$ are obtained by solving the equation \eqref{NewtonIter_equ}. See Algorithm \ref{algorithm_am_put} for details.
\begin{algorithm}\label{algorithm_am_put}
\caption{Finite difference scheme for \eqref{hjb2}}
\textbf{Inputs:} $\Delta t, \Delta y, N, N_t, N_y, K$ and the tolerance level $\ep>0$.\\
\textbf{Learning procedure:}\\
Initialize $u_{N_t,j}$ for all $j$ by \eqref{u_terminal} as well as $u_{i,0}$ and $u_{i,N_y}$ for all $i$ by \eqref{u_boundary}.\\
\For {$i = N_t-1,\ldots,0$}{
	Set $u_{i, j}^0 = u_{i+1, j}$ for $j=1,\ldots,N_y-1$.\\
	\For{$k=0,1,\dots$}{
		Solve the equation \eqref{NewtonIter_equ} to get $u_{i,j}^{k+1}$ for $j=1,\ldots,N_y-1$.\\
		\If{$ {||u_i^{k+1}-u_i^k||_{\infty}}< \ep* {\max\left\{1,\ ||u_i^k||_{\infty}\right\}} $}
		{Quit the loop for $k$.}			
	}
	Set $u_{i,j}=u_{i,j}^{k+1}$ for $j=1,\ldots,N_y-1$.\\
}
\end{algorithm}

\phantomsection
\addcontentsline{toc}{section}{References}
\bibliographystyle{apalike}

\begin{thebibliography}{3}
	\providecommand{\natexlab}[1]{#1}
	\providecommand{\url}[1]{\texttt{#1}}
	\providecommand{\urlprefix}{URL }
	
	
	
	\bibitem[Ajdari et~al., 2019]{ajdari2019towards}
	Ajdari, A., Niyazi, M., Nicolay, N. H., Thieke, C., Jeraj, R., and Bortfeld, T. (2019).
	\newblock Towards optimal stopping in radiation therapy.
	\newblock {\em Radiotherapy and Oncology}, 134:96--100.
	
	\bibitem[Barberis, 2012]{barberis2012model}
	Barberis, N. (2012).
	\newblock A model of casino gambling.
	\newblock {\em Management Science}, 58(1):35--51.
	
	\bibitem[Becker et~al., 2019]{becker2019deep}
	Becker, S., Cheridito, P., and Jentzen, A. (2019).
	\newblock Deep optimal stopping.
	\newblock {\em Journal of Machine Learning Research}, 20(74):1--25.
	
	\bibitem[Becker et~al., 2020]{becker2020pricing}
	Becker, S., Cheridito, P., and Jentzen, A. (2020).
	\newblock Pricing and hedging American-style options with deep learning.
	\newblock {\em Journal of Risk and Financial Management}, 13(7):158.
	
	\bibitem[Becker et~al., 2021]{becker2021solving}
	Becker, S., Cheridito, P., Jentzen, A., and Welti, T. (2021).
	\newblock Solving high-dimensional optimal stopping problems using deep learning.
	\newblock {\em European Journal of Applied Mathematics}, 32(3):470--514.
	
	
	\bibitem[Dai and Dong, 2024]{dai2024learningcost}
	Dai, M. and Dong, Y. (2024).
	\newblock Learning an optimal investment policy with transaction costs via a randomized Dynkin game.
	\newblock {\em Available at SSRN 4871712}.
	
	\bibitem[Dai et~al., 2023]{dai2023equilibrium}
	Dai, M., Dong, Y., and Jia, Y. (2023).
	\newblock Learning equilibrium mean-variance strategy.
	\newblock {\em Mathematical Finance}, 33(4):1166--1212.
	
	\bibitem[Dai et~al., 2023]{dai2023learning}
	Dai, M., Dong, Y., Jia, Y., and Zhou, X.~Y. (2023).
	\newblock Learning Merton's strategies in an incomplete market: Recursive entropy regularization and biased Gaussian exploration.
	\newblock {\em arXiv preprint arXiv:2312.11797}.
	
	\bibitem[Dai et~al., 2025]{dai2025datadriven}
	Dai, M., Dong, Y., Jia, Y., and Zhou, X.~Y. (2025).
	\newblock Data-driven Merton's strategies via policy randomization.
	\newblock {\em arXiv preprint arXiv:2312.11797}.
	
	\bibitem[Dai et~al., 2007]{dai2007intensity}
	Dai, M., Kwok, Y. K., and You, H. (2007).
	\newblock Intensity-based framework and penalty formulation of optimal stopping problems.
	\newblock {\em Journal of Economic Dynamics and Control}, 31(12):3860--3880.
	
	\bibitem[Dai et~al., 2019]{dai2019bayesian}
	Dai, Z., Yu, H., Low, B. K. H., and Jaillet, P. (2019).
	\newblock Bayesian optimization meets Bayesian optimal stopping.
	\newblock {\em International Conference on Machine Learning}, PMLR: 1496--1506.
	
	\bibitem[Dianetti et~al., 2024]{dianetti2024exploratory}
	Dianetti, J., Ferrari, G., and Xu, R. (2024).
	\newblock Exploratory optimal stopping: A singular control formulation.
	\newblock {\em arXiv preprint arXiv:2408.09335}.
	
	\bibitem[Dong, 2024]{dong2023randomized}
	Dong, Y. (2024).
	\newblock Randomized optimal stopping problem in continuous time and reinforcement learning algorithm.
	\newblock {\em SIAM Journal on Control and Optimization}, 62(3):1590--1614.
	
	\bibitem[Fleming and Soner, 2006]{fleming2006controlled}
	Fleming, W. H. and Soner, H. M. (2006).
	\newblock {\em Controlled Markov Processes and Viscosity Solutions}, (Vol. 25). Springer Science \& Business Media.
	
	\bibitem[Forsyth and Vetzal, 2002]{forsyth2002quadratic}
	Forsyth, P.~A. and Vetzal, K.~R. (2002).
	\newblock Quadratic convergence for valuing American options using a penalty method.
	\newblock {\em SIAM Journal on Scientific Computing}, 23(6):2095--2122.
	
	\bibitem[Friedman, 1982]{friedman1982}
	Friedman, A. (1982).
	\newblock {\em Variational Principles and Free Boundary Problems}, Wiley, New York.
	
	\bibitem[Gao et~al., 2022]{gao2022state}
	Gao, X., Xu, Z. Q., and Zhou, X. Y. (2022).
	\newblock State-dependent temperature control for Langevin diffusions.
	\newblock {\em SIAM Journal on Control and Optimization}, 60(3):1250--1268.
	
	\bibitem[Grigelionis and Shiryaev, 1966]{grigelionis1966stefan}
	Grigelionis, B. I., and Shiryaev, A. N. (1966).
	\newblock On Stefan’s problem and optimal stopping rules for Markov processes.
	\newblock {\em Theory of Probability and Its Applications}, 11(4):541--558.	
	
	\bibitem[Han et~al., 2018]{han2018}
	Han, J., Jentzen, A., and E, W. (2018).
	\newblock Solving high-dimensional partial differential equations using
	deep learning.
	\newblock {\em Proceedings of the National Academy of Sciences}, 115(34):8505--8510.
	
	\bibitem[He et~al., 2017]{he2017path}
	He, X. D., Hu, S., Obłój, J., and Zhou, X. Y. (2017).
	\newblock Path-dependent and randomized strategies in Barberis’ casino gambling model.
	\newblock {\em Operations Research}, 65(1):97--103.
	
	\bibitem[Hu et~al., 2023]{hu2023path}
	Hu, S., Obłój, J., and Zhou, X. Y. (2023).
	\newblock A casino gambling model under cumulative prospect theory: Analysis and algorithm.
	\newblock {\em Management Science}, 69(4):2474--2496.
	
	
	
	\bibitem[Herrera et~al., 2024]{herrera2024optimal}
	Herrera, C., Krach, F., Ruyssen, P., and Teichmann, J. (2024).
	\newblock Optimal stopping via randomized neural networks.
	\newblock {\em Frontiers of Mathematical Finance}, 3(1):31--77.
	
	
	\bibitem[Jia et al., 2025]{jia2025accuracy}
	Jia, Y., Ouyang, D.,  and Zhang, Y. (2025).	
	\newblock Accuracy of discretely sampled stochastic policies in continuous-time reinforcement learning.
	\newblock {\it arXiv preprint arXiv:2503.09981}.
	
	\bibitem[Jia and Zhou, 2022a]{jia2022policy}
	Jia, Y. and Zhou, X.~Y. (2022a).
	\newblock Policy evaluation and temporal-difference learning in continuous time and space: A martingale approach.
	\newblock {\em Journal of Machine Learning Research}, 23(154):1--55.
	
	\bibitem[Jia and Zhou, 2022b]{jia2022policygradient}
	Jia, Y. and Zhou, X.~Y. (2022b).
	\newblock Policy gradient and actor-critic learning in continuous time and space: Theory and algorithms.
	\newblock {\em Journal of Machine Learning Research}, 23(1):12603--12652.
	
	\bibitem[Jia and Zhou, 2023]{jia2023q}
	Jia, Y. and Zhou, X.~Y. (2023).
	\newblock q-learning in continuous time.
	\newblock {\em Journal of Machine Learning Research}, 24(161):1--61.
	
	\bibitem[Jiang et~al., 2022]{jiang2022reinforcement}
	Jiang, R., Saunders, D., and Weng, C. (2022).
	\newblock The reinforcement learning Kelly strategy.
	\newblock {\em Quantitative Finance}, 22(8):1445--1464.
	
	\bibitem[Liang et~al., 2007]{liang2007rate}
	Liang, J., Hu, B., Jiang, L., and Bian, B. (2007).
	\newblock On the rate of convergence of the binomial tree scheme for American options.
	\newblock {\em Numerische Mathematik}, 107(2):333--352.
	
	\bibitem[Liyanage et~al., 2019]{liyanage2019automating}
	Liyanage, Y. W., Zois, D. S., Chelmis, C., and Yao, M. (2019)
	\newblock Automating the classification of urban issue reports: An optimal stopping approach.
	\newblock {\em IEEE International Conference on Acoustics, Speech and Signal Processing (ICASSP)}, 3137--3141.
	
	\bibitem[Munos, 2006]{munos2006policy}
	Munos, R. (2006).
	\newblock Policy gradient in continuous time.
	\newblock {\em Journal of Machine Learning Research}, 7:771--791.
	
	\bibitem[Peng et~al., 2024]{peng2024dpm}
	Peng, Y., Wei, P., and Wei, W. (2024).
	\newblock Deep penalty methods: A class of deep learning algorithms for solving high dimensional optimal stopping problems.
	\newblock {\em arXiv preprint arXiv:2405.11392}.
	
	\bibitem[Pham, 2009]{pham2009}
	Pham, H. (2009).
	\newblock {\em Continuous-time Stochastic Control and Optimization with Financial Applications}, Springer Science \& Business Media.
	
	\bibitem[Reppen et~al., 2025]{reppen2022neural}
	Reppen, A. M., Soner, H. M., and Tissot-Daguette, V. (2025).
	\newblock Neural optimal stopping boundary.
	\newblock {\em Mathematical Finance}, 35(2):441--469.
	
	
	\bibitem[Sutton and Barto, 1999]{Sutton2018}
	Sutton, R.~S. and Barto, A.~G. (2018).
	\newblock {\em Reinforcement Learning: An Introduction}, MIT press.
	
	
	\bibitem[Tallec et~al., 2019]{tallec2019making}
	Tallec, C., Blier, L., and Ollivier, Y. (2019).
	\newblock Making deep q-learning methods robust to time discretization.
	\newblock {\em Proceedings of the 36th International Conference on Machine Learning}, PMLR 97:6096--6104.
	
	\bibitem[Wang and Perdikaris, 2021]{wang2021deep}
	Wang, S., and Perdikaris, P. (2021).
	\newblock Deep learning of free boundary and Stefan problems.
	\newblock {\em Journal of Computational Physics}, 428:109914.
	
	\bibitem[Wang et~al., 2020]{wang2020reinforcement}
	Wang, H., Zariphopoulou, T., and Zhou, X.~Y. (2020).
	\newblock Reinforcement learning in continuous time and space: A stochastic control approach.
	\newblock {\em Journal of Machine Learning Research}, 21(1):8145--8178.
	
	\bibitem[Wang and Zhou, 2020]{wang2020continuous}
	Wang, H. and Zhou, X.~Y. (2020).
	\newblock Continuous-time mean--variance portfolio selection: A reinforcement learning framework.
	\newblock {\em Mathematical Finance}, 30(4):1273--1308.
	
	\bibitem[Wu and Li, 2024]{wu2024reinforcement}
	Wu, B. and Li, L. (2024).
	\newblock Reinforcement learning for continuous-time mean-variance portfolio selection in a regime-switching market.
	\newblock {\em Journal of Economic Dynamics and Control}, 158:104787.
	
	\bibitem[Yong and Zhou, 1999]{YZ99}
	Yong, J. and Zhou, X.~Y. (1999).
	\newblock {\em Stochastic Controls : Hamiltonian Systems and HJB Equations}, New York : Springer.
	
	\bibitem[Zhang et~al., 2022]{zhang2022solution}
	Zhang, K., Yang, X., and Wang, S. (2022).
	\newblock Solution method for discrete double obstacle problems based on a power penalty approach.
	\newblock {\em Journal of Industrial and Management Optimization}, 18(2):1261--1274.
	
\end{thebibliography}

\end{document}